\documentclass[notitlepage,twoside,a4paper]{amsart}
\usepackage{mathrsfs}
\usepackage{amsmath,amssymb,enumerate}
\usepackage{epsfig,fancyhdr,color}
\usepackage{epstopdf}
\usepackage{amssymb}
\usepackage{amsmath,amsthm}
\usepackage{latexsym}
\usepackage{amscd}
\usepackage{psfrag}
\usepackage{graphicx}
\usepackage{epsf}
\usepackage[latin1]{inputenc}
\usepackage[all]{xy}
\usepackage{tikz}
\usepackage{prettyref}
\usepackage{subfigure}
\usepackage{float}
\usepackage{color}
\usepackage{array}
\usepackage{cite}
\usepackage[totalwidth=16cm,totalheight=24cm]{geometry}

\newcommand{\II}{I\hspace{-0.1cm}I}
\newcommand{\III}{I\hspace{-0.1cm}I\hspace{-0.1cm}I}


\newtheorem{theorem}{\rm\bf Theorem}[section]
\newtheorem{proposition}[theorem]{\rm\bf Proposition}

\newtheorem{lemma}[theorem]{\rm\bf Lemma}
\newtheorem{corollary}[theorem]{\rm\bf Corollary}
\newtheorem{definition}[theorem]{\rm\bf Definition}

\newtheorem{question}[theorem]{\rm\bf Question}

\newtheoremstyle{named}{}{}{\itshape}{}{\bfseries}{.}{.5em}{#1 \thmnote{#3}}
\theoremstyle{named}

\newcommand{\C}{{\mathbb C}}
\newcommand{\CP}{{\mathbb CP}}
\newcommand{\N}{{\mathbb N}}
\newcommand{\DD}{{\mathbb D}}
\newcommand{\HH}{{\mathbb H}}
\newcommand{\R}{{\mathbb R}}

\newcommand{\cD}{{\mathcal D}}

\newcommand{\Isom}{\rm{Isom}}

\newcommand{\tr}{\mbox{tr}}

\newcounter{notes}%

\def\interieur#1{\mathord{\mathop{\kern 0pt #1}\limits^\circ}}

\title[The Weyl problem for unbounded domain]{The Weyl problem for unbounded convex domains in $\HH^3$}

\author{Jean-Marc Schlenker}
\address{Jean-Marc Schlenker:
University of Luxembourg, FSTM, Department of Mathematics, 
Maison du nombre, 6 avenue de la Fonte,
L-4364 Esch-sur-Alzette, Luxembourg}
\email{jean-marc.schlenker@uni.lu}
\thanks{Partially supported by FNR project O20/14766753.}

\date{v2, \today}

\begin{document}

\begin{abstract}
  Let $K\subset \HH^3$ be a convex subset in $\HH^3$ with smooth, strictly convex boundary. The induced metric on $\partial K$ then has curvature $K>-1$. It was proved by Alexandrov that if $K$ is bounded, then it is uniquely determined by the induced metric on the boundary, and any smooth metric with curvature $K>-1$ can be obtained.

  We propose here an extension of the existence part of this result to unbounded convex domains in $\HH^3$. The induced metric on $\partial K$ is then clearly not sufficient to determine $K$. However one can consider a richer data on the boundary including the ideal boundary of $K$. Specifically, we consider the data composed of the conformal structure on the full boundary of $K$ (in the Poincar\'e model of $\HH^3$), together with the induced metric on $\partial K$. We show that a wide range of ``reasonable'' data of this type, satisfying mild curvature conditions, can be realized on the boundary of a convex subset in $\HH^3$.
  

      \bigskip
      \noindent Keywords: Weyl problem, hyperbolic geometry, convex domain, isometric embedding.
\end{abstract}

	\maketitle

        \tableofcontents
        
    \section{Introduction and main results}

\subsection{The classical Weyl problem in Euclidean and hyperbolic space}
\label{ssc:weyl}

The study of smooth or polyhedral convex surfaces in the 3-dimensional Euclidean space $\R^3$ has a long history. One of the first ``modern'' results on this theme was the proof by Legendre \cite{legendre} and Cauchy \cite{cauchy} of the rigidity of convex polyhedra: if two polyhedra have the same combinatorics and corresponding faces are isometric, then they are congruent.

Alexandrov \cite{alex} later realized that Cauchy's rigidity result is related to a much deeper statement. The induced metric on the boundary of a convex polyhedron in $\R^3$ is an Euclidean metric with cone singularities with cone angle less than $2\pi$ at the vertices, and Alexandrov proved that each metric of this type on the sphere can be realized as the induced metric on the boundary of a unique polyhedron. This result was later extended by Alexandrov \cite{alex} to bounded convex subset of $\R^3$, thus solving the ``Weyl problem'', proposed by H. Weyl in 1915 and for which substantial progress had already been made by H. Lewy \cite{lewy1935priori} and others.

\begin{theorem}[Lewy, Alexandrov] \label{tm:alexE}
  Any smooth metric of positive curvature on the sphere is induced on the boundary of a unique smooth strictly convex subset of $\R^3$. 
\end{theorem}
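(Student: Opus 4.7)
The plan is to prove existence by the continuity method and uniqueness by Cohn--Vossen's rigidity theorem. Let $\mathcal{M}_+$ denote the space of smooth Riemannian metrics of positive Gauss curvature on $S^2$, and $\cE$ the space of strictly convex $C^\infty$ embeddings $f\colon S^2\hookrightarrow\R^3$ modulo orientation-preserving Euclidean isometries. Define $\Phi\colon\cE\to\mathcal{M}_+$ by $\Phi(f)=f^*\langle\cdot,\cdot\rangle_{\R^3}$. The given metric $g$ can be joined to a round sphere metric $g_0$ by a smooth path $(g_t)_{t\in[0,1]}$ in $\mathcal{M}_+$ (for instance by convex combination with a constant-curvature metric of sufficiently small curvature and a subsequent rescaling, to keep $K>0$ throughout). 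Existence then reduces to showing that the set $T=\{t\in[0,1]:g_t\in\Phi(\cE)\}$ is nonempty (it contains $0$), open and closed, hence all of $[0,1]$.

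For openness near a realization $f_0$ of $g_t$, I would apply the implicit function theorem in H\"older spaces. The differential $d\Phi_{f_0}$ acts on normal variations of $f_0$, and its kernel, modulo the infinitesimal action of the Euclidean isometry group, consists of infinitesimal isometric deformations of the closed convex surface $f_0(S^2)$. The Blaschke--Cohn-Vossen infinitesimal rigidity theorem asserts that this kernel is trivial. Ellipticity of $d\Phi_{f_0}$ is a consequence of the strict convexity of $f_0(S^2)$, and together these give that $d\Phi_{f_0}$ is an isomorphism onto the tangent space to $\mathcal{M}_+$ modulo gauge, yielding openness.

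Closedness is the crucial analytic point. Given $t_n\to t_\ast$ and realizations $f_n$ of $g_{t_n}$, the aim is subsequential $C^\infty$ convergence to an embedding realizing $g_{t_\ast}$. A uniform positive lower bound on the Gauss curvature gives, through Gauss--Bonnet together with elementary extrinsic comparison, a uniform bound on the extrinsic diameter, so after translation the images $f_n(S^2)$ lie in a fixed ball. The core difficulty is then a uniform $C^2$ bound on $f_n$: the embedding is governed by a fully nonlinear equation of Monge--Amp\`ere type (the Darboux equation) expressing the second fundamental form in terms of the metric and its curvature. This a priori estimate is the analytic heart of the Weyl problem and the main obstacle of the whole argument; it was resolved by Lewy in the analytic case and by Nirenberg (and independently Pogorelov) in the smooth case. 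Once $C^2$ bounds are in hand, Schauder-type bootstrapping yields $C^{k,\alpha}$ bounds for every $k$, and the curvature lower bound ensures that the limit remains strictly convex.

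Uniqueness follows from Cohn--Vossen's rigidity theorem: two closed $C^2$ convex surfaces in $\R^3$ with isometric induced metrics are congruent. A standard proof integrates Herglotz's divergence identity, involving the mixed discriminant of the two second fundamental forms transported to a common abstract $S^2$, which forces them to coincide pointwise, whence congruence follows from the fundamental theorem of surface theory.
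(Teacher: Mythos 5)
The paper does not prove this statement: Theorem \ref{tm:alexE} is quoted as classical background, attributed to Lewy, Alexandrov, Nirenberg and Pogorelov, and used later only as a black box (via its hyperbolic analogue, Theorem \ref{tm:alexH}). So there is no proof in the paper to compare yours against; what you have written is the standard continuity-method proof of the Weyl problem together with Cohn--Vossen rigidity for uniqueness, and that is indeed the correct classical route.

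That said, be aware that your text is a road map rather than a proof. Every step that carries actual difficulty is invoked by citation: the openness step rests on the Blaschke--Cohn-Vossen infinitesimal rigidity of closed convex surfaces; the closedness step rests entirely on the a priori $C^2$ estimate for the Darboux (Monge--Amp\`ere) equation, which you explicitly defer to Lewy, Nirenberg and Pogorelov; and uniqueness rests on Cohn--Vossen's global rigidity theorem via the Herglotz integral identity. None of these is derived. In addition, the very first step --- joining $g$ to the round metric by a path of positive-curvature metrics --- is not automatic: a convex combination of two positive-curvature metrics need not have positive curvature in general, and the fact that one can always choose such a path (Weyl's connectedness lemma, or a conformal interpolation after uniformization) requires an argument you do not give. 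If the theorem is to be treated as a quotable classical fact, as the paper does, your summary is an accurate account of how it is proved; if it is to be proved, the a priori $C^2$ estimate is the entire content and is missing.
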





Alexandrov and Pogorelov also extended Theorem \ref{tm:alexE} to hyperbolic space.

\begin{theorem}[Alexandrov, Pogorelov] \label{tm:alexH}
  Any smooth metric of curvature $K> -1$ on the sphere is induced on the boundary of a unique convex subset of $\HH^3$ with smooth boundary. 
\end{theorem}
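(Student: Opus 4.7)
I would proceed by a continuity (or degree) method, adapting to the hyperbolic setting the strategy behind Theorem \ref{tm:alexE}. Let $\mathcal{M}_{>-1}$ be the space of smooth Riemannian metrics on $S^2$ with Gauss curvature everywhere strictly greater than $-1$, and $\mathcal{K}$ the space of smooth, strictly convex compact bodies of $\HH^3$ modulo orientation-preserving isometries of $\HH^3$. The induced metric defines a map $\Phi:\mathcal{K}\to\mathcal{M}_{>-1}$, which is well-defined since the Gauss equation in $\HH^3$ reads $K_{\rm int}=k_1k_2-1>-1$ whenever $k_1,k_2>0$. The goal is to prove that $\Phi$ is a diffeomorphism of Banach manifolds in appropriate H\"older categories.

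The \emph{openness} step rests on infinitesimal rigidity: the linearization $d\Phi_K$ sends a normal variation $u$ of $\partial K$ to the first-order variation of the induced metric, and its kernel is the space of infinitesimal isometric deformations of $\partial K$ in $\HH^3$. For smooth strictly convex closed surfaces this space is trivial (hyperbolic Cohn--Vossen rigidity, obtained from an integrated Minkowski-type identity). Standard elliptic theory then upgrades injectivity to the isomorphism property. For \emph{closedness}, given $g_n=\Phi(K_n)\to g$ in $C^\infty$, I would establish uniform $C^{2,\alpha}$ bounds on $\partial K_n$ via a Pogorelov-type interior estimate for the Monge--Amp\`ere-type equation satisfied by the hyperbolic support function of $K_n$, together with an intrinsic diameter bound from a Bonnet--Myers-type argument applied to $g_n$, and an ambient normalization preventing $K_n$ from drifting to infinity. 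Evans--Krylov and Schauder then promote this to $C^{k,\alpha}$ bounds, and a subsequential limit yields $K\in\mathcal{K}$ with $\Phi(K)=g$.

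The base point is any round geodesic ball of $\HH^3$, whose boundary carries a round metric on $S^2$; since $\mathcal{M}_{>-1}$ is path-connected, openness combined with closedness yields surjectivity. Global \emph{uniqueness} (injectivity of $\Phi$) then follows by integrating the same infinitesimal rigidity along a one-parameter family connecting two candidate realizations, in the spirit of Cauchy's argument. The \textbf{main obstacle} is the a priori estimate in the closedness step: the hyperbolic support function lives in a chart of $\HH^3$ rather than on a Euclidean sphere, the Monge--Amp\`ere-type PDE it satisfies carries extra lower-order terms reflecting the ambient curvature $-1$, and uniform ellipticity along the deformation must be extracted from the strict inequality $K>-1$, which via the Gauss equation is precisely what keeps $k_1k_2$ bounded away from $0$ and hence the second fundamental form uniformly positive.
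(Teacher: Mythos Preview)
The paper does not give a proof of this theorem at all: Theorem \ref{tm:alexH} is stated as a classical result due to Alexandrov and Pogorelov and is used throughout the paper as a black box (most crucially in Section \ref{ssc:subsets}, where it is applied to the approximating metrics $h_n$). There is therefore no ``paper's own proof'' to compare your proposal against.

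That said, your outline is a reasonable sketch of the classical continuity-method approach, and the identification of the a priori estimate as the main difficulty is accurate. One point deserves correction: you invoke a ``Bonnet--Myers-type argument'' to obtain a uniform intrinsic diameter bound on the $g_n$, but Bonnet--Myers requires a positive lower curvature bound, whereas here you only have $K>-1$. Fortunately this is not needed: since the $g_n$ converge in $C^\infty$ to $g$ on the compact manifold $S^2$, their diameters converge to $\mathrm{diam}(g)$ and are therefore uniformly bounded automatically. Similarly, your uniqueness argument (``integrating infinitesimal rigidity along a one-parameter family'') is heuristically right but not quite how the global rigidity is actually established; the standard route is a direct Herglotz-type integral formula showing that two isometric strictly convex closed surfaces in $\HH^3$ are congruent, without needing to connect them by a path of realizations.
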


Versions of Theorem \ref{tm:alexH} also hold for ideal polyhedra \cite{rivin-comp} and for hyperideal polyhedra \cite{shu} (see also \cite{rsc} for a result on convex domains bounded by disks, as well as a broad generalization by Luo and Wu \cite{luo-wu}). The existence of those results hints at the possibility of a wider extension to more general unbounded convex domains in $\HH^3$ as considered here.

\subsection{The boundary data on unbounded convex subsets}

Before stating our main result, we define a notion of ``boundary data'' that is adapted to complete convex subsets of $\HH^3$. In the special case of bounded convex subsets, it coincides with the induced metric in the usual sense.

We consider a convex subsets $\Omega\subset \HH^3$, which a priori is unbounded. Let $\partial_0\Omega=\partial\rho(\Omega)$, where $\rho:\HH^3\to \R^3$ is the Poincar\'e model of $\HH^3$. So $\partial_0\Omega$ is the ``full'' boundary of $\Omega$, including its metric boundary in $\HH^3$ but also its ideal boundary.  

We now define a notion of boundary data for a convex subset of $\HH^3$. When this subset is bounded, it's just the induced metric on the boundary. For an unbounded convex domain $\Omega$, however, a boundary data records both the induced metric on the metric boundary of $\Omega$ and the conformal structure on the ``full'' boundary $\partial_0\Omega$, coming from the induced metric on the metric boundary but also from the conformal structure at infinity on the ideal boundary.

\begin{definition} \label{df:boundary}
  Let $\Omega\subset \HH^3$ be a convex subset. A {\em boundary data} for $\Omega$ is a pair $(U,h)$, where $U\subset \CP^1$ is an open subset, $h$ is a complete, conformal metric on $U$, such that there exists a homeomorphism $\gamma:\partial_0\Omega\to \CP^1$ such that:
  \begin{itemize}
  \item $\gamma$ is conformal on the interior of $\partial_\infty \Omega$,
  \item $\gamma\circ\rho(\partial\Omega)=U$, and $\gamma\circ \rho$ is an isometry from $\partial \Omega$ equipped with its induced metric to $(U,h)$. 
  \end{itemize}
\end{definition}

The hyperbolic Gauss formula shows that $h$ has curvature $K\geq -1$. Note that $\partial_0\Omega=\partial\Omega\cup \partial_\infty\Omega$, and a homeomorphism $\gamma$ as above defines a conformal structure on $\partial\Omega$ and on $\partial_\infty\Omega$, by pull-back by $\gamma$ of the conformal structure on $\CP^1$.

We do not consider here whether a boundary data (and homeomorphism $\gamma$) is always unique, see \cite[\S 1.4]{weylsurvey} for partial results in this direction. We note however that one always exists under conditions that cover the hypothesis of Theorem \ref{tm:main} below. The statement uses the following definition.

\begin{definition} \label{df:bounded}
  Let $h$ be a complete, smooth, conformal metric on a surface $S$ of curvature $K$. We will say that $h$ has {\em bounded derivatives} if for each $k\geq 1$, there exists a constant $C_k>0$ such that the $k$-th derivatives of $K$, measured with respect to $h$, are bounded by $C_k$.
\end{definition}

\begin{lemma} \label{lm:existence}
  Let $\Omega\subset \HH^3$ be a convex subset such that $\partial\Omega$ is a disjoint union of topological disks, and that the induced metric on $\partial \Omega$ has 
  \begin{enumerate}
  \item curvature $K\in [-1+\epsilon,1/\epsilon]$, for some $\epsilon>0$,
  \item bounded derivatives,
  \item injectivity radius bounded from below by $\epsilon$. 
  \end{enumerate}
  Then there exists a boundary data for $\Omega$. 
\end{lemma}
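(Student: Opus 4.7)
The strategy is to equip $\partial_0\Omega$ with a natural Riemann surface structure and then apply uniformization. Since $\Omega$ is convex, $\overline{\rho(\Omega)}\subset\overline{\bD^3}$ is a topological closed $3$-ball, so $\partial_0\Omega$ is a topological $2$-sphere, partitioned into the open subset $\rho(\partial\Omega)=\partial_0\Omega\cap \bD^3$ and the closed subset $\partial_\infty\Omega=\partial_0\Omega\cap S^2$. On $\text{int}(\partial_\infty\Omega)$ we take the conformal class inherited from $\CP^1$; on $\rho(\partial\Omega)$ we take the conformal class of the induced hyperbolic metric on $\partial\Omega$, which agrees with the Euclidean conformal class of $\rho(\partial\Omega)\subset\R^3$ because the Poincar\'e model is conformal.

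The key step is to merge these into a single Riemann surface structure $c_0$ on all of $\partial_0\Omega$. The natural unifying object is the metric induced on $\partial_0\Omega$ by the ambient Euclidean structure of $\R^3$, whose restrictions to the two open pieces lie in the two conformal classes above. The hypotheses $K\in[-1+\epsilon,1/\epsilon]$ and injectivity radius bounded below by $\epsilon$ should imply, via two-sided bounds on the second fundamental form of $\partial\Omega$ together with convexity of $\Omega$, that $\partial_0\Omega$ embeds in $\R^3$ as a uniformly Lipschitz topological $2$-sphere, possibly with a controlled ``crease'' along the interface $\partial(\partial_\infty\Omega)\subset S^2$, as in the case of an equidistant surface. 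The resulting bounded Lipschitz Riemannian metric then defines, via the measurable Riemann mapping theorem, a Riemann surface structure $c_0$ extending both of the structures previously defined on the two open pieces. This regularity analysis at the interface is the main obstacle; the curvature and injectivity bounds serve precisely to prevent pathological accumulation of $\partial\Omega$ on $\partial_\infty\Omega$.

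Once $c_0$ is in hand, uniformization of the $2$-sphere yields a conformal homeomorphism $\gamma:(\partial_0\Omega,c_0)\to\CP^1$. Set $U:=\gamma\circ\rho(\partial\Omega)$, which is open in $\CP^1$ since $\rho(\partial\Omega)$ is open in $\partial_0\Omega$ and $\gamma$ is a homeomorphism, and let $h$ be the push-forward by $\gamma\circ\rho$ of the induced metric $g_\partial$ on $\partial\Omega$. Then $h$ is a conformal metric on $U$, and $\gamma\circ\rho$ is by construction an isometry from $(\partial\Omega,g_\partial)$ onto $(U,h)$; moreover $\gamma$ is conformal on $\text{int}(\partial_\infty\Omega)$. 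Completeness of $(\partial\Omega,g_\partial)$ follows from the facts that $\partial\Omega$ is closed in the complete space $\HH^3$ and that the intrinsic distance on $\partial\Omega$ dominates the restriction of the hyperbolic distance, so intrinsic balls on $\partial\Omega$ are relatively compact; completeness of $(U,h)$ then follows by the isometry, and $(U,h)$ is the required boundary data.
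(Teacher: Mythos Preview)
Your overall strategy---put a conformal structure on the topological sphere $\partial_0\Omega$ and then uniformize---is reasonable and would lead to the right conclusion, but you have correctly identified, and then left open, the one genuinely nontrivial step: controlling the conformal structure across the interface $\partial(\partial_\infty\Omega)$. The phrase ``should imply'' is carrying all the weight. To complete your argument you would need to prove that $\partial\rho(\Omega)\subset\overline{\bD^3}$ is a uniformly Lipschitz sphere, which in effect means showing that under the curvature hypotheses the tangent planes of $\rho(\partial\Omega)$ meet $S^2$ at an angle uniformly bounded away from $0$ and $\pi$. This is plausible (and visibly true for equidistant surfaces, as you note), but it is not immediate and you have not supplied it.

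The paper sidesteps this difficulty by working on the \emph{smooth} sphere $\partial_\infty\HH^3$ rather than on the possibly singular $\partial_0\Omega$. It uses the closest-point retraction $\delta:\partial_\infty\HH^3\to\partial_0\Omega$, equal to the identity on $\partial_\infty\Omega$ and to the nearest-point projection onto $\partial\Omega$ on the complement. The hypotheses feed into Lemma~\ref{lm:p-curvature} to give two-sided bounds on the principal curvatures of $\partial\Omega$, and since the differential of $\delta$ on $\partial_\infty\HH^3\setminus\partial_\infty\Omega$ is controlled by the shape operator, $\delta$ is $K$-quasiconformal for a $K$ depending only on $\epsilon$. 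One then solves the Beltrami equation on $\partial_\infty\HH^3$ with the Beltrami coefficient of $\delta$ (extended by zero on $\partial_\infty\Omega$), obtaining $q:\partial_\infty\HH^3\to\CP^1$ conformal on $\partial_\infty\Omega$ and with the same Beltrami data as $\delta$ elsewhere; then $\delta\circ q^{-1}:\CP^1\to\partial_0\Omega$ is conformal by construction, and $(U,h)$ is read off by pulling back the induced metric on $\partial\Omega$.

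The gain is that all the analysis happens on the standard $\CP^1$, where the measurable Riemann mapping theorem applies with no interface discussion whatsoever; the geometry enters only through the principal curvature bound, which is proved independently. Your route, by contrast, would require an extra Euclidean regularity lemma for $\partial_0\Omega$ before uniformization can even be invoked.
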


\begin{proof}
  Let $\delta:\partial_\infty\HH^3\to \partial_0\Omega$ be the closest-point projection -- in other terms, $\delta$ is the identity on $\partial_\infty\Omega$, while it projects each point of $\partial_\infty\HH^3\setminus \partial_\infty\Omega$ to a point of $\partial\Omega$.

  It is proved in Lemma \ref{lm:p-curvature} that, under the hypothesis considered here, $\partial\Omega$ has principal curvatures in $[1/k_0, k_0]$, for some $k_0>1$.

  As a consequence, $\delta$ is $K_0$-quasi-conformal, for a constant $K_0$ depending only on $k_0$. Applying the measurable Riemann mapping theorem \cite{ahlfors-bers:riemann} to the Beltrami differential of $\delta$ on $\partial_\infty\HH^3\setminus \partial_\infty\Omega$, we obtain a quasi-conformal homeomorphism $q:\partial_\infty\HH^3\to \CP^1$ which is conformal on $\partial_\infty\Omega$, and has the same Beltrami differential as $\delta$ on $\partial_\infty\HH^3\setminus \partial_\infty\Omega$.

  The homeomorphism $\delta\circ q^{-1}:\CP^1\to \partial_0\Omega$ is then conformal by construction. The pair $((\delta\circ q^{-1})^{-1}(\partial\Omega), (\delta\circ q^{-1})^*I)$ is therefore a boundary data for $\Omega$, where $I$ denotes the induced metric on $\partial\Omega$.
\end{proof}

Our main motivation is the following question, appearing as Question $W^{\Omega}_{\HH^3}$ in \cite{weylsurvey}, reformulated here in line with Definition \ref{df:boundary}.

\begin{question} \label{q:weyl}
  Let $U\subset \CP^1$ be an open subset, and let $h$ be a complete metric of curvature $K\geq -1$ on $U$. Is there a unique convex subset $\Omega\subset \HH^3$ such that $(U,h)$ is a boundary data for $\Omega$? 
\end{question}

As pointed out in \cite{weylsurvey}, the answer to this question in such a general setting might be negative, and additional assumptions might be necessary. We provide here an existence result under stronger curvature conditions, and under a mild regularity asumption on the boundary of $U$ together with a topological hypothesis on $U$. 

\begin{theorem} \label{tm:main}
  Let $k_0>0$, $U\subset \CP^1$ be an open subset which is a disjoint union of $k_0$-quasidisks, let $\epsilon>0$, and let $h$ be a smooth, complete metric or curvature $K\in [-1+\epsilon,-\epsilon]$ on $U$ with bounded derivatives. 
  There exists a convex subset $\Omega\subset \HH^3$ such that the boundary data of $\Omega$ is $(U,h)$. 
\end{theorem}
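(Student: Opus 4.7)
The plan is to reduce Theorem \ref{tm:main} to the bounded Alexandrov--Pogorelov theorem (Theorem \ref{tm:alexH}) by exhausting $U$ with compact subdomains, applying the bounded theorem to capped-off sphere metrics, passing to a limit using the uniform principal curvature bounds of Lemma \ref{lm:p-curvature}, and finally using the quasidisk hypothesis to match the conformal structure of the ideal boundary.

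Write $U = \bigsqcup_i D_i$ with each $D_i$ a $k_0$-quasidisk, and choose an increasing exhaustion $\Sigma_1 \Subset \Sigma_2 \Subset \cdots$ of $U$ by relatively compact open subsets with smooth boundary such that $\bigcup_n \Sigma_n = U$. For each $n$, extend $h|_{\Sigma_n}$ across the components of $\CP^1 \setminus \overline{\Sigma_n}$ (each a topological disk) by smoothly gluing in a region of constant curvature $+1$ through a short collar interpolation, yielding a smooth metric $\tilde h_n$ on $S^2\cong \CP^1$ with $K>-1$ everywhere. Theorem \ref{tm:alexH} then produces a bounded smooth strictly convex body $\Omega_n\subset \HH^3$ whose boundary is isometric to $(S^2,\tilde h_n)$; let $S_n\subset \partial\Omega_n$ be the subsurface corresponding to $\Sigma_n$.

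By Lemma \ref{lm:p-curvature} the principal curvatures of $S_n$ are pinched in $[1/k_0',k_0']$ uniformly in $n$, the curvature pinching being inherited from $h$ while the required injectivity radius lower bound is stable under the exhaustion in view of the hypothesis $K\leq -\epsilon<0$. Choose a basepoint $x_0\in \Sigma_1$ and normalize each $\Omega_n$ by a hyperbolic isometry pinning the image of $x_0$ together with its tangent plane. Standard elliptic regularity for the prescribed-metric equation together with Arzel\`a--Ascoli then yields a subsequence for which $S_n\to S$ in $C^\infty_{\mathrm{loc}}$, where $S\subset \HH^3$ is a properly embedded, complete, smooth, strictly convex surface whose induced metric is isometric to $(U,h)$. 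Let $\Omega$ be the closed convex region bounded by $S$.

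It remains to verify that a boundary data of $\Omega$ (which exists by Lemma \ref{lm:existence}) realizes the prescribed embedding $U\subset \CP^1$, not merely some quasi-conformal deformation of it. Using the closest-point projection construction of Lemma \ref{lm:existence}, one extracts quasi-conformal homeomorphisms $q_n:\partial_\infty\HH^3\to \CP^1$ associated to $\Omega_n$ with a uniform dilatation bound. After fixing three points, $\{q_n\}$ is a normal family, and a convergent subsequence produces the desired $q$ for $\Omega$. The gluing of caps in the first step was arranged precisely so that the conformal classes on the capping disks carry over correctly to the limit. This last step is the main obstacle: the intrinsic metric $(U,h)$ does not by itself determine the embedding $U\hookrightarrow \CP^1$, and one must argue that the limit conformal structure is exactly the prescribed one rather than some quasi-conformally equivalent modification. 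I expect the $k_0$-quasidisk hypothesis to be essential here both for the uniform quasi-conformal compactness and for preventing a component of $\CP^1\setminus U$ from collapsing in the limit.
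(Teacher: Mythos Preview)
Your overall strategy---approximate by metrics on $S^2$, apply Theorem \ref{tm:alexH}, pass to a limit using the principal curvature bounds---is the same as the paper's. However, there is a genuine gap in the step you yourself flag as ``the main obstacle'', and it stems from a concrete defect in your approximation scheme.

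The paper does not glue in caps of fixed curvature $+1$. Instead it writes $h=e^{2u}c_1$ for a fixed spherical metric $c_1$ and defines $h_n=e^{2\phi_n(u)}c_1$ using a carefully chosen cutoff (Definition \ref{df:hn}), so that $h_n$ is \emph{conformal to $c_1$ on all of $\CP^1$} and has constant curvature $e^{-2(n+1)}\to 0$ on $\CP^1\setminus U$. Two things follow. First, the parameterizations $\phi_n:\CP^1\to\partial\Omega_n$ are conformal globally, so the Gauss maps $G_n\circ\phi_n$ are uniformly quasiconformal maps $\CP^1\to\partial_\infty\HH^3$ (Lemma \ref{lm:gauss}) and one can extract a limiting quasiconformal homeomorphism in which the relationship to the original copy of $\CP^1$ is built in. Second, and crucially, because the cap curvature tends to $0$, Lemma \ref{lm:flat-regions} (via Volkov--Vladimirova) forces the principal curvatures on the cap regions to approach $1$; the surface there becomes asymptotically horospherical, hence the Gauss map becomes asymptotically conformal on the interior of $\CP^1\setminus U$. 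This is exactly what identifies the conformal structure on $\partial_\infty\Omega$ with that of $\CP^1\setminus U$ in the limit (Lemma \ref{lm:final}).

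Your scheme has neither ingredient. A metric cap of curvature $+1$ glued along a collar is not in any controlled conformal relationship with $\CP^1\setminus\Sigma_n$, so you have no global conformal parameterization $\CP^1\to\partial\Omega_n$ to take a limit of; and with cap curvature bounded away from $0$ there is no mechanism forcing the boundary there to be nearly umbilic, so even if you extracted a limit map you could not show it is conformal on the interior of $\CP^1\setminus U$. Your closing paragraph recognizes the problem but does not solve it; the quasidisk hypothesis in the paper enters earlier, to bound $\|du\|_h$ (Lemma \ref{lm:conformal-bounds}) and hence to control the curvature and injectivity radius of the specific $h_n$ (Lemmas \ref{lm:Kmax}, \ref{lm:inj}), not as a post-hoc fix for the conformal structure at infinity.
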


A {\em quasidisk} in $\CP^1$ is the image of a disk by a quasiconformal map from $\CP^1$ to $\CP^1$.

\subsection{A dual question}

Specifically in $\HH^3$, there is also a {\em dual} Weyl problem, concerning the third fundamental form on the boundary of convex bodies, rather than their induced metrics. This third fundamental form is in fact the induced metric on the boundary of a dual convex body in the de Sitter space. 

For ideal polyhedra, the third fundamental form is encoded in the (exterior) dihedral angles, so that the dual Weyl problem corresponds to determining which families of dihedral angles can be obtained, see \cite{Andreev-ideal,rivin-annals}. For bounded polyhedron, it corresponds to the ``dual metric'' considered by Hodgson and Rivin \cite{HR}, while a result is also known for smooth metrics \cite{these}.

Question \ref{q:weyl} has an analog concerning the extension of this ``dual'' Weyl problem to unbounded convex domains with a prescribed data at infinity that includes the third fundamental form, rather than the induced metric, on $\partial\Omega$, see \cite{weylsurvey}. We do not consider this dual problem here. It appears plausible that the methods developed here to prove Theorem \ref{tm:main} could also apply, in some form, to prove a dual statement, but a different regularization procedure appears to be needed to obtain the necessary estimates.

\subsection{Related results and motivations}

A list of related results as well as some background information can be found \cite{weylsurvey}, we only list here a few results and conjectures in direct relation with Theorem \ref{tm:main}.

\subsubsection*{Convex subsets with a quasicircle at infinity}

One of the simplest cases that can be considered is when $U$ is the complement of a quasi-circle in $\CP^1$, and $h$ is a complete metric of constant curvature $K\in (-1,0)$ on $U$. The corresponding special cases of Theorem \ref{tm:main} (together with a dual result with the induced metric replaced by the third fundamental form) is the main result of \cite{convexhull}. (Note that the result of \cite{convexhull} on the induced metrics also covers the case $K=-1$, where the convex subset $\Omega$ is the convex hull of its ideal boundary.) Recently, this existence result was extended to metrics of variable curvature by Abderrahim Mesbah \cite{mesbah2024}.

The statements of Theorem A and Theorem B in \cite{convexhull} appear superficially quite different from the statement of Theorem \ref{tm:main}, since it concerns the possibility to prescribe the ``gluing map'' at infinity between the induced metrics on the two connected components of the boundary of a domain, with induced metric of constant curvature, which has as boundary at infinity a quasicircle. A moment of reflection shows however that Theorem B of \cite{convexhull} is the special case of Theorem \ref{tm:main} when $U$ is the complement of a quasicircle in $\CP^1$ and $h$ has constant curvature $K\in (-1,0)$, with the choice of a gluing map equivalent, through conformal welding, to the choice of the quasicircle which is the complement of $U$ in $\CP^1$ .  

\subsubsection*{Convex subsets in quasifuchsian manifolds}

The main motivation for this result from \cite{convexhull} is a conjecture made by Thurston concerning the induced metric and the measured bending lamination on the convex cores of geometrically finite hyperbolic manifolds. We recall those conjectures in the quasifuchsian case for simplicity. A {\em quasifuchsian manifold} is a complete hyperbolic manifold, homeomorphic to $S\times \R$, where $S$ is a closed surface of genus at least $2$, containing a compact, non-empty, geodesically convex subset. A quasifuchsian manifold $M$ contains a smallest non-empty convex subset, its {\em convex core} $C(M)$, which is either a totally geodesic surface (in the ``Fuchsian'' case) or a closed domain with boundary the disjoint union of two surfaces, each homeomorphic to $S$, with an induced metric of constant curvature $-1$, pleated along a measured laminations. Thurston thus stated two conjectures:
\begin{enumerate}
\item any pair of hyperbolic metrics on $S$, considered up to isotopy, can be uniquely realized as the induced metrics on the boundary of a quasifuchsian manifold,
\item any pair of measured laminations on $S$ that fill together, such that no leaf has weight larger than $\pi$, can be realized uniquely as the measured bending lamination on the convex core of a quasifuchsian manifold.
\end{enumerate}
The existence part of both statements is known, for (1) it is a consequence of results of Labourie \cite{L4} or of Epstein--Marden \cite{epstein-marden}, while for (2) it is proved (for geometrically finite hyperbolic manifolds) by Bonahon--Otal \cite{bonahon-otal}. Uniqueness was recently obtained for (2) \cite{uniqueness}, while it remains open for (1).

Existence and uniqueness results are known for larger geodesically convex subsets (containing the convex core) with smooth boundary. The induced metric then has curvature $K>-1$, while the measured bending lamination is replaced by the third fundamental form of the boundary, which has curvature $K<1$ and closed, contractible geodesics of length $L>2\pi$. It is proved in \cite{L4} (for the existence) and \cite{hmcb} (for the uniqueness) that, if $h_-, h_+$ are two smooth metrics of curvature $K>-1$ on a closed surface $S$ of genus at least $2$, there exists a unique hyperbolic metric with convex boundary on $S\times [0,1]$ such that the induced metrics on the two connected components of the boundary are $h_-$ and $h_+$.

\subsubsection*{Convex domains in convex co-compact hyperbolic manifolds}

The two Thurston conjectures above can in fact be stated for convex co-compact hyperbolic manifolds, and in this more general setting the existence has been proved for both conjecture, and are due to the authors mentioned above in the quasifuchsian setting.

Moreover, if $M$ is a closed manifold with non-empty boundary which admits a convex co-compact hyperbolic manifold on its interior, then, again by \cite{L4,hmcb}, any metric of curvature $K>-1$ on the boundary of $M$ can be uniquely realized as the induced metric of a convex hyperbolic metric $g$ on $M$.

Theorem \ref{tm:main} can be seen as a (partial) generalization of the existence part of this result. Indeed, given such a hyperbolic metric $g$ on $M$, the universal cover $\tilde{M}$ of $M$, equipped with the lifted metric $\tilde g$, is isometric to the interior of a convex domain $\Omega\subset \HH^3$, with $\partial_\infty\Omega$ equal to the limit set $\Lambda$ of $M$. $\Omega$ is then invariant under the action of a representation $\rho:\pi_1M\to PSL(2,\C)$. The existence result from \cite{L4} then implies the existence of a convex domain $\Omega\subset \HH^3$ with a prescribed metric of curvature $K>-1$ on each boundary component (equal to the lift of a metric on the corresponding boundary component of $\partial M$). Moreover, in $\partial_0\Omega$ equipped with the underlying conformal structure, each connected component of $\partial\Omega$ corresponds to a quasidisk. So this result from \cite{L4} implies a special case of Theorem \ref{tm:main}, which moreover is invariant under the representation $\rho$ (without the asumption that the curvature is negative). Theorem \ref{tm:main} can be considered as extending the existence result of \cite{L4} without 
the need for a group action. However Theorem \ref{tm:main} is more general since it allows for situations where $U$ is not dense in $\CP^1$. 

\subsubsection*{Convex domains complete on one side in quasifuchsian manifolds}

One such situation can be found by considering a closed, geodesically convex, subset $\Omega$ in a quasifuchsian manifold $M$, assuming that $\Omega$ is complete ``on one side'', that is, $\Omega$ is homeomorphic to $S\times  [0,\infty)$, where $S$ is again a closed surface of genus at least $2$ and $\Omega$ is complete on the side corresponding to $\infty$, while its metric boundary corresponds to $S\times \{ 0\}$, which is a smooth, locally strictly convex surface $\partial\Omega$  in $M$. The induced metric on $\partial\Omega$ then has curvature $K>-1$.

One can then ask whether given any conformal structure $c$ on $S$, and any metric $h$ of curvature $K>-1$ on $S$ (both considered up to isotopy) there is a unique quasifuchsian metric on $S\times \R$, containing a unique geodesically convex subset $\Omega$ complete on one side, such that the conformal structure on the complete boundary is $c$, while the induced metric on the other boundary is $h$. This case is considered in \cite{chen2022geometric}, where an existence and uniqueness result is obtained.

Lifting to the universal cover of $\Omega$, this translates as another instance of Theorem \ref{tm:main}, where $U$ is now a disk in $\CP^1$ (corresponding to the universal cover of $\partial\Omega$) while the interior of the complement of $U$ corresponds to the universal cover of $\partial_\infty\Omega$. An existence result in this setting is proved in \cite{chen2022geometric}, by methods which are quite different from those considered here.



\subsection{Questions not considered here}

The Weyl problem described in Section \ref{ssc:weyl} can also be stated in terms of isometric immersions of locally convex surfaces in $\R^3$ or $\HH^3$. This point of view leads to different extensions to unbounded locally convex surfaces in $\HH^3$, see \cite[Section 5]{weylsurvey}. We do not consider questions arising from this other point of view here.

It should be noted that the curvature hypothesis in Theorem \ref{tm:main} are quite restrictive. On one hand, one could consider metrics of curvature $K\geq -1$, including the special case of $K=-1$, corresponding to the case where $\partial\Omega$ is a locally convex pleated surface, and $\Omega$ is the convex hull of its asymptotic boundary $\partial_\infty\Omega$. On the other hand, one could consider metrics with positive curvature at some points -- this is excluded here for ``technical'' reasons. The hypothesis of bounded derivatives is also probably too restrictive, and could be weakened with additional efforts.

We also do not consider here the uniqueness in Theorem \ref{tm:main}, since different methods and tools are necessary.

\subsection{Outline of the proofs and content of the paper}

Section \ref{sc:background} contains a number of well-known constructions and results that are needed at various places in the paper. Section \ref{sc:approximation} shows how a given boundary data $(U,h)$ can be ``approximated'', in a suitable sense, by a sequence $(h_n)_{n\in \N}$ of Riemannian metrics on $S^2$ which are equal to $h$ on increasing domains in $U$, and have constant positive curvature (going to $0$ as $n\to \infty$) on open subsets containing the complement of $U$. The Alexandrov theorem can be applied to those metrics, yielding a sequence $(\Omega_n)_{n\in \N}$ of convex subsets of $\HH^3$, and the remaining part of the paper is focused on proving that $\Omega_n\to \Omega$, where $\Omega$ satisfies the conditions of Theorem \ref{tm:main}.

Section \ref{sc:bounds} provides a key estimate on the principal curvatures of surfaces under bounds on the curvature of the induced metrics, following well-understoods arguments that need to be adapted to our context.

Section \ref{sc:proofs} then contains the proof of the main results.  One key idea of the proof of Theorem \ref{tm:main} is that, thanks to the bounds on the principal curvatures in Section \ref{sc:bounds}, the Gauss maps of the surfaces $\partial\Omega_n$ can be considered as uniformly quasi-conformal diffeomorphisms from $\CP^1$ to $\CP^1$. This sequence therefore has a subsequence converging to a quasiconformal homeomorphism, while, for each connected component of $U$, the corresponding sequence of surfaces in $\HH^3$ (suitably normalized by isometries) has a subsequence converging on compact subsets of $\HH^3$ to a limit surface. The relative positions of those limit surfaces is controlled by the Gauss maps of the $\partial\Omega_n$, leading to the proof of Theorem \ref{tm:main}.

\subsection*{Acknowledgements}

The author is very grateful to Francesco Bonsante, Qiyu Chen, Jeff Danciger, Sara Maloni and Andrea Seppi for many discussions related more or less closely to the result presented here. He would also like to thank Abderrahim Mesbah for pointing out an issue in a previous version of the text.


    \section{Background}
\label{sc:background}

This section contains two elementary results, used below, recalled here for the reader's convenience and to avoid ambiguities in notations.

\subsection{Conformal metrics on surfaces}
\label{ssc:conformal}

Consider a surface $S$ equipped with a smooth Riemannian metric $h$. The following lemma indicates how the curvature changes under a conformal change of metrics.

\begin{lemma} \label{lm:conformal}
  Let $(S,h)$ be a surface equipped with a Riemannian metric, let $u:S\to \R$ be a smooth function, and let $\bar h=e^{2u}h$. The curvature of $\bar h$ is equal to
  $$ \bar K = e^{-2u}(\Delta_hu+K)~, $$
  where $K$ is the curvature of $h$.
\end{lemma}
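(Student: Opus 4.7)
The plan is to work in local isothermal coordinates for $h$, reducing the problem to the classical planar formula for the Gaussian curvature of a conformally flat metric. Since the statement is pointwise, I may pick any $p\in S$ and choose isothermal coordinates $(x,y)$ in a neighborhood of $p$, so that $h=e^{2\varphi}(dx^2+dy^2)$ for some smooth function $\varphi$. Then automatically $\bar h = e^{2(\varphi+u)}(dx^2+dy^2)$ is also written in the same isothermal coordinates.

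The key classical fact I would invoke is that for any conformally flat metric $e^{2\psi}(dx^2+dy^2)$ on a planar domain, the Gaussian curvature is given by $K_\psi = e^{-2\psi}\Delta_0 \psi$, where $\Delta_0$ denotes the flat Laplacian in the geometers' sign convention (so that $\Delta_0$ acts on functions as minus the sum of second partials, which is the convention consistent with the statement of the lemma). Applying this to both $h$ and $\bar h$ gives
\[
K = e^{-2\varphi}\Delta_0 \varphi, \qquad
\bar K = e^{-2(\varphi+u)}\Delta_0(\varphi+u) = e^{-2u}\bigl(e^{-2\varphi}\Delta_0\varphi\bigr) + e^{-2(\varphi+u)}\Delta_0 u.
\]

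To conclude, I would use the conformal covariance of the Laplacian in dimension two, namely the identity $\Delta_h = e^{-2\varphi}\Delta_0$ acting on functions (a direct consequence of the coordinate expression of $\Delta_h$ with the conformal factor $e^{2\varphi}$). Substituting $\Delta_0 u = e^{2\varphi}\Delta_h u$ into the second term above yields
\[
\bar K = e^{-2u}K + e^{-2u}\Delta_h u = e^{-2u}\bigl(\Delta_h u + K\bigr),
\]
which is the desired formula. Since the identification is pointwise and both sides are intrinsic, the choice of isothermal chart is irrelevant.

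The only real subtlety is the sign convention for the Laplacian: the formula as stated requires using $\Delta_h$ as a non-negative operator (i.e.\ $-\mathrm{div}\,\mathrm{grad}$), so I would be careful to fix this convention at the start of the proof to match the statement. Apart from this bookkeeping, there is no genuine obstacle: the existence of isothermal coordinates on a smooth Riemannian surface and the planar formula for Gaussian curvature are standard, and the conformal covariance $\Delta_{e^{2u}h} = e^{-2u}\Delta_h$ in dimension two is a direct computation.
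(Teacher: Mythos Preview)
Your proof is correct. The paper does not actually prove this lemma: it simply refers to \cite[Theorem 1.159]{Be} and records the sign convention $\Delta_h u = -\tr(Ddu)$, which is exactly the positive-operator convention you flag as the only subtlety. Your argument via isothermal coordinates and the planar formula $K_\psi = e^{-2\psi}\Delta_0\psi$ together with the two-dimensional conformal covariance $\Delta_h = e^{-2\varphi}\Delta_0$ is the standard direct route and is more informative than a bare citation; there is nothing further to compare.
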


Note that in this formula and elsewhere, the Laplacian of $h$ is defined as
$$ \Delta_h u = -\tr(Ddu)~, $$
where $D$ is the Levi-Civita connection of $h$. In other terms, $\Delta_h$ is a positive operator on closed surfaces.

We refer to \cite[Theorem 1.159]{Be} for a proof.

\subsection{Surfaces in $\HH^3$}
\label{ssc:III}

Let $S\subset \HH^3$ be a smooth, oriented surface. We define its {\em shape operator} $B$ as
$$ Bx = \nabla_x n~, $$
where $\nabla$ denotes the Levi-Civita connection of $\HH^3$, $n$ is the unit oriented normal to $S$, and $x$ is any tangent vector to $S$. We then define the {\em second fundamental form} of $S$ as
$$ \II(x,y)=I(Bx,y)~, $$
where $I$ denotes the induced metric on $S$ (also classically known as the first fundamental form) and $x,y$ are two tangent vectors at the same point, and the {\em third fundamental form} of $S$ as
$$ \III(x,y)=I(Bx,By)~. $$
The second fundamental form $\II$ is a symmetric bilinear form on $TS$, and it is positive semi-definite when $S$ is locally convex, with unit normal oriented towards the concave side. We will say that $S$ is {\em strictly convex} if $\II$ is positive definite.

The shape operator $B$ of a smooth surface $S$ in $\HH^3$ satisfies the {\em Gauss-Codazzi} equation:
$$ \det(B)=K-1~, $$
where $K$ is the curvature of the induced metric $I$, and, for any two vector fields $x,y$ tangent to $S$,
$$ (\nabla_x B)(y)-(\nabla_yB)(x)=0~. $$


    \section{Approximation by metrics on the sphere}
\label{sc:approximation}

In this section we define a sequence of conformal metrics on $\CP^1$, that ``approximate'' in a suitable sense a metric $h$ defined on an open subset $U$, as seen in the statement of Theorem \ref{tm:main}. We then show that this sequence of approximating metrics has suitable bounds on curvatures. This will allow us in Section \ref{sc:proofs} to prove that one can apply the Alexandrov Theorem to those metrics and obtain a sequence of convex domains converging (after extraction of a subsequence) to a non-compact domain having the required boundary data.

\subsection{Bounds on conformal factors}

The main goal of this section is to obtain a bound on the gradient of the conformal factor on each connected component of $U$. We consider a fixed spherical metric $c_1$ on $\CP^1$, and a connected component $U_0$ of $U\subset \CP^1$, equipped with a complete Riemannian metric $h$ which can be written as
$$ h =e^{2u}c_1~, $$
where $u:U_0\to \R$. Since $c_1$ is a spherical metric on $\partial_\infty \HH^3$, it is the visual metric associated to a point $m_1\in \HH^3$. Recall that given a point $m\in \HH^3$, the visual metric $c_m$ on $\partial_\infty \HH^3$ associated to $m$ is the spherical metric such that the distance between to ideal points $\xi, \eta\in \partial_\infty \HH^3$ is the angle between the geodesic rays from $m$ to $\xi$ and $\eta$. To simplify a bit the argument, we will assume that $m_1\in CH(\CP^1\setminus U_0)$, where $CH$ denotes the hyperbolic convex hull.

\begin{lemma} \label{lm:conformal-bounds}
  Assume that $h$ has curvature $K\in [-1+\epsilon,-\epsilon]$, where $\epsilon>0$. Assume moreover that  $U_0$ is a $k$-quasi-disk. Then there exists a constant $C>0$, depending only on $k$,  such that for all $m\in U_0$, $\| du(m)\|_h\leq C$.
\end{lemma}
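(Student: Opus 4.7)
The plan is to compare $h$ with the complete hyperbolic metric $g_{hyp}$ on the simply connected domain $U_0$, and then separately control the conformal factor between $g_{hyp}$ and the visual metric $c_1$ using the quasi-disk hypothesis.

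Writing $h = e^{2w} g_{hyp}$, the first step is to apply the Ahlfors--Schwarz lemma (using the upper bound $K_h \leq -\epsilon$) and Yau's generalized Schwarz lemma (using the lower bound $K_h \geq -1+\epsilon$ together with the completeness of $h$) to the identity map of $U_0$. This yields a two-sided comparison $c(\epsilon) g_{hyp} \leq h \leq C(\epsilon) g_{hyp}$, so in particular $|w|$ is bounded by a constant depending only on $\epsilon$. By Lemma \ref{lm:conformal} and $K_{g_{hyp}} = -1$, the function $w$ satisfies
\[ \Delta_{g_{hyp}} w = K_h e^{2w} + 1, \]
whose right-hand side is then uniformly bounded. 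Since $U_0$ is simply connected and distinct from $\CP^1$, the space $(U_0, g_{hyp})$ is isometric to the Poincar\'e disk via Riemann uniformization; in particular, each point of $U_0$ admits a geodesic ball of, say, unit radius isometric to a ball in $\HH^2$. On such a ball both $w$ and $\Delta_{g_{hyp}} w$ are bounded, so interior Schauder or $L^p$ estimates yield a uniform pointwise bound $\|dw\|_{g_{hyp}} \leq C(\epsilon)$, and by the comparison the same bound holds up to a constant in the $h$-norm.

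The second step is to decompose $u = v + w$, with $g_{hyp} = e^{2v} c_1$, and bound $\|dv\|_{g_{hyp}}$. This is where the quasi-disk hypothesis is used essentially: by the Gehring--Osgood theorem, the hyperbolic metric on a $k$-quasi-disk is bi-Lipschitz equivalent (with constants depending only on $k$) to the quasi-hyperbolic metric $|dz|^2/d_{sph}(z, \partial U_0)^2$, from which one deduces that $|\nabla \log \rho_{hyp}|_{g_{hyp}}$ is uniformly bounded on $U_0$ by a constant depending only on $k$. The hypothesis $m_1 \in \CH(\CP^1 \setminus U_0)$ guarantees in turn that $\rho_{c_1}$ and $|\nabla \log \rho_{c_1}|$ are uniformly bounded on $U_0$, so that subtracting $\log \rho_{c_1}$ from $\log \rho_{hyp}$ preserves the gradient estimate. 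Combining everything gives $\|du\|_h \leq \|dv\|_h + \|dw\|_h \leq C(k,\epsilon)$.

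The main obstacle is the quasi-disk bound in the last step, where the constant $k$ enters quantitatively. For a round disk this reduces to the direct computation $|\nabla \log(2/(1-|z|^2))|_{g_{hyp}^{std}} = |z| \leq 1$, but for a general quasi-disk it requires the comparison of the hyperbolic and quasi-hyperbolic metrics. The other ingredients (Schwarz-type lemmas and interior elliptic regularity on the hyperbolic disk, which has bounded geometry) are more routine.
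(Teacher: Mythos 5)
Your first step is exactly the paper's Lemma \ref{lm:v}: the Ahlfors--Schwarz lemma and Yau's Schwarz lemma give the two-sided comparison $h\asymp h_{-1}$, and then the bound on $\Delta_{h_{-1}}w$ together with interior elliptic estimates on hyperbolic balls gives $\|dw\|\leq C(\epsilon)$. That part is fine and matches the paper.

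The second step has a genuine gap. You want $\|dv\|$ bounded where $g_{hyp}=e^{2v}c_1$, and you try to get it from the statement that $g_{hyp}$ is \emph{bi-Lipschitz} equivalent to the quasi-hyperbolic metric, ``from which one deduces that $|\nabla\log\rho_{hyp}|_{g_{hyp}}$ is uniformly bounded.'' That deduction is invalid: a bi-Lipschitz (zeroth-order) comparison of two conformal metrics gives no control whatsoever on the gradient of the logarithm of their ratio (multiply a density by $1+\tfrac12\sin$ of a rapidly oscillating function to see this). The bound $\|d\log\rho_{hyp}\|_{g_{hyp}}\leq 2$ relative to a Euclidean coordinate \emph{is} true, but it is a first-order statement that comes from the Koebe distortion bound on $f''/f'$ for a Riemann map (and is then universal for simply connected domains --- the quasidisk hypothesis plays no role there). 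Even granting that, you still have to bound $\|d\log\rho_{c_1}\|_{g_{hyp}}$, where $\rho_{c_1}$ is the density of the round visual metric in the \emph{same} Euclidean coordinate; this term is controlled by $d_{eucl}(z,\partial U_0)\cdot\|d\log\rho_{c_1}\|_{eucl}$, which is bounded only once you use the normalization $m_1\in CH(\CP^1\setminus U_0)$ to pin down where $\partial U_0$ sits relative to the round metric. Your sentence asserting that this hypothesis ``guarantees that $\rho_{c_1}$ and $|\nabla\log\rho_{c_1}|$ are uniformly bounded'' does not identify this mechanism (as stated, those quantities are trivially bounded in the Euclidean norm with no hypothesis, and that is not what is needed). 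The paper sidesteps both difficulties by inserting the Thurston metric $h_{Th}$ as an intermediary: $h_{-1}=e^{2w}h_{Th}$ is handled by a compactness argument for $k$-quasidisks normalized to contain $\DD$ as a maximal disk (this is where $k$ enters), and $h_{Th}=e^{2x}c_1$ is handled by the explicit formula comparing visual metrics $c_m$ and $c_{m_1}$, where $m_1\in CH(\CP^1\setminus U_0)$ forces the relevant angle to be at least $\pi/2$ and hence bounds $dx$ uniformly. Your route could probably be repaired (Koebe for the $\rho_{hyp}$ part, plus a careful positional argument for the $\rho_{c_1}$ part), but as written the central first-order estimate is not proved.
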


The proof is based on comparisons between four conformal metrics on $U_0$. In addition to $h$ and $c_1$, we will consider the hyperbolic metric $h_{-1}$ (conformal to $h$) on $U_0$, and the Thurston metric $h_{Th}$ on $U_0$.

Recall that the Thurston metric is defined as follows: for all $m\in U_0$, we let $D$ be a maximal disk containing $m$ and contained in $U_0$, and let $h_{Th}$ at $m$ be equal to the complete hyperbolic metric on $D$ at $m$. This metric $h_{Th}$ then has curvature in $[-1,0]$, see e.g. \cite{kulkarni-pinkall}.

We will denote by $v,w,x:U_0\to \R$ the functions such that
$$ h=e^{2v}h_{-1}~,~~ h_{-1}=e^{2w}h_{Th}~, ~~ h_{Th}=e^{2x}c_1~. $$
The proof of Lemma \ref{lm:conformal-bounds} will directly follows from the following three lemmas.

\begin{lemma} \label{lm:v}
  Let $h_{-1}$ be the complete hyperbolic metric in the conformal class of $h$ on $U_0$. Suppose that $h=e^{2v}h_{-1}$, and that $h$ is complete and has curvature in $[-1+\epsilon,-\epsilon]$. There exists a constant $C_v>0$, depending only on $\epsilon$, such that, on $U_0$, $v\in [0,C_v]$ and $\| dv\|_{h_{-1}}\leq C_v$. 
\end{lemma}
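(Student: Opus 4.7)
The plan is to derive a semilinear PDE for $v$ from Lemma~\ref{lm:conformal} and then combine two Schwarz-lemma type comparisons with interior elliptic regularity. Applying Lemma~\ref{lm:conformal} to $h = e^{2v} h_{-1}$ with $K_{h_{-1}} = -1$ yields
\[
\Delta_{h_{-1}} v \;=\; 1 + e^{2v} K_h \qquad \text{on } U_0,
\]
and the hypothesis $K_h \in [-1+\epsilon, -\epsilon]$ bounds the right-hand side on both sides in terms of $e^{2v}$.

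For the upper bound, the rescaled metric $\epsilon h$ has Gauss curvature $K_h / \epsilon \leq -1$. The classical Ahlfors--Schwarz lemma, stating that $h_{-1}$ is the maximal conformal metric on $U_0$ of curvature $\leq -1$, then gives $\epsilon h \leq h_{-1}$, so $v \leq \tfrac{1}{2} \log(1/\epsilon)$. For the lower bound, I would apply Yau's generalized Schwarz lemma to the identity map $\mathrm{id} \colon (U_0, h) \to (U_0, h_{-1})$: the source is a complete K\"ahler surface (in complex dimension~$1$) of Ricci $=$ Gauss curvature bounded below by $-(1-\epsilon)$, the target has Gauss curvature $-1$, so Yau's inequality reads $h_{-1} \leq (1-\epsilon)\, h$, i.e.\ $e^{2v} \geq 1/(1-\epsilon)$, hence $v \geq \tfrac{1}{2}\log(1/(1-\epsilon)) > 0$.

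Once $v$ is bounded in $L^\infty(U_0)$ by a constant depending only on $\epsilon$, the PDE also bounds $\Delta_{h_{-1}} v$ in $L^\infty$ in terms of $\epsilon$ alone. For the gradient estimate, I would pass to the universal cover of $(U_0, h_{-1})$, which is isometric to $\HH^2$, pull back $v$ to a bounded function with bounded Laplacian, and apply interior Schauder estimates on geodesic balls of radius~$1$. The homogeneity of $\HH^2$ makes these estimates uniform in the basepoint, yielding a bound on $\|dv\|_{h_{-1}}$ depending only on $\epsilon$, which then descends to $U_0$.

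The main obstacle is the lower bound on $v$. A naive maximum-principle argument comparing $v$ with $\tfrac{1}{2}\log(1/(1-\epsilon))$ via $\Delta_{h_{-1}} v \geq 1 - (1-\epsilon) e^{2v}$ does not directly conclude, because $U_0$ is non-compact and the infimum of $v$ might only be approached along the ideal boundary. Yau's Schwarz lemma sidesteps this by exploiting the completeness of $(U_0, h)$ through the Omori--Yau maximum principle built into its proof; a self-contained alternative would apply Omori--Yau to $-v$ directly on $(U_0, h)$, using the Ricci lower bound from the curvature hypothesis.
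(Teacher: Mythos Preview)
Your proposal is correct and follows essentially the same route as the paper: Ahlfors--Schwarz for the upper bound on $v$, Yau's Schwarz lemma (via completeness of $h$) for the lower bound, and then local elliptic estimates once $v$ and $\Delta v$ are bounded. The only cosmetic differences are that the paper writes the PDE with respect to $h$ rather than $h_{-1}$ (namely $\Delta_h v = K + e^{-2v}$) and obtains the gradient bound by a harmonic-plus-Green's-function decomposition on balls of $(U_0,h)$ rather than by invoking interior elliptic estimates on $\HH^2$; note that with only an $L^\infty$ bound on the Laplacian you want $W^{2,p}$/Calder\'on--Zygmund estimates rather than Schauder proper.
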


\begin{lemma} \label{lm:w}
  Suppose that $U_0$ is a $k$-quasidisk. Then there exists a constant $C_w>0$, depending only on $k$, such that, on $U_0$, $w\in [-C_w,0]$ and $\| dw\|_{h_{-1}}\leq C_w$. 
\end{lemma}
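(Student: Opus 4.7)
The plan is to compare $h_{Th}$ to the Poincar\'e metric $h_{-1}$ on $U_0$ in three stages: an $L^\infty$ upper bound on $w$, an $L^\infty$ lower bound on $w$, and finally a pointwise gradient bound.

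First, I would establish $w \leq 0$ by invoking domain monotonicity of the Poincar\'e metric. For any round disk $D \subset U_0$ containing a point $m \in U_0$, the inclusion $D \hookrightarrow U_0$ gives $h_{-1}^{U_0}(m) \leq h_{-1}^{D}(m)$. Since $h_{Th}(m)$ is, by definition, the density at $m$ of the hyperbolic metric of a maximal such disk, this yields $h_{-1}(m) \leq h_{Th}(m)$, equivalently $w(m) \leq 0$. This argument is universal and does not require the quasidisk hypothesis.

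Second, for the lower bound $w \geq -\log 2$, I would invoke the Koebe $1/4$-theorem. After a M\"obius transformation of $\CP^1$ sending a complementary point to $\infty$, pick a Riemann map $f:\bD \to U_0$ with $f(0) = m$. The Poincar\'e density of $U_0$ at $m$ equals $2/|f'(0)|$ in Euclidean terms, while Koebe guarantees that the Euclidean disk $B(m, |f'(0)|/4)$ is contained in $U_0$. The hyperbolic metric density of this disk at its center equals $8/|f'(0)| = 4\, h_{-1}(m)$, and since the disk is a legitimate candidate in the definition of the Thurston metric, $h_{Th}(m) \leq 4\, h_{-1}(m)$, giving $w(m) \geq -\log 2$. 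Again the constant is universal.

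Third, for the gradient bound $\|dw\|_{h_{-1}} \leq C_w$, I would combine Lemma \ref{lm:conformal} with standard elliptic regularity. Writing $h_{Th} = e^{-2w} h_{-1}$, and using that $h_{-1}$ has curvature $-1$ while $h_{Th}$ has curvature $K_{Th} \in [-1,0]$ almost everywhere (as the Thurston metric is $C^{1,1}$ by Kulkarni--Pinkall), Lemma \ref{lm:conformal} yields the weak equation
$$ \Delta_{h_{-1}} w \;=\; -1 - e^{-2w} K_{Th}, $$
whose right-hand side is uniformly bounded in $L^\infty$ thanks to the control on $w$ just established. Since $(U_0, h_{-1})$ is isometric to $\HH^2$, interior $W^{2,p}$ estimates (for any $p > 2$) on geodesic balls of fixed radius, composed with the Sobolev embedding $W^{2,p} \hookrightarrow C^{1,\alpha}$ in dimension two, deliver a pointwise bound $\|dw\|_{h_{-1}}(m) \leq C_w$. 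Homogeneity of $\HH^2$ makes $C_w$ independent of $m$.

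The main obstacle will be making this last step rigorous: $K_{Th}$ is only an $L^\infty$ function, so the equation must be read in the weak sense, and either Calder\'on--Zygmund type $W^{2,p}$ theory is invoked directly or one mollifies $h_{Th}$ to smooth metrics and passes to the limit using the uniform bounds. I note finally that all three constants produced by this scheme are in fact universal, so the quasidisk hypothesis in the statement is not actually used in this lemma; it is kept for coherence with Lemma \ref{lm:v} and Lemma \ref{lm:conformal-bounds}.
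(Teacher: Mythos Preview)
Your argument is correct, but it differs substantially from the paper's proof. The paper normalizes by a M\"obius transformation so that the unit disk $\DD$ is a maximal disk in $U_0$ with $m$ sent to $0$, and then argues by compactness: the space of $k$-quasidisks containing $\DD$ as a maximal disk is compact in the Hausdorff topology, and both $h_{-1}$ and $h_{Th}$ vary continuously with the domain, so $|w(0)|$ and $\|dw(0)\|_{h_{-1}}$ are bounded by a constant depending on $k$. No explicit PDE or Koebe argument appears.

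Your route is more direct and yields more: the bounds $w\in[-\log 2,0]$ from domain monotonicity and Koebe are universal (depending only on the simple connectivity of $U_0$, not on the quasiconformal constant $k$), and then the elliptic estimate for $\Delta_{h_{-1}}w=-1-e^{-2w}K_{Th}$ on $\HH^2$ gives a universal gradient bound as well. This is strictly sharper than what the paper states. One small correction to your closing remark: the quasidisk hypothesis \emph{is} used, but only through simple connectivity (for the Riemann map and for the identification $(U_0,h_{-1})\simeq\HH^2$); what is not used is the value of $k$. Your caveat about the $C^{1,1}$ regularity of $h_{Th}$ is well placed---reading the curvature equation weakly and invoking interior $W^{2,p}$ estimates is the right way to handle it, and this is essentially the same mechanism the paper sketches in its proof of Lemma~\ref{lm:v}.
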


\begin{lemma} \label{lm:x}
  There exists a constant $C_x>0$ such that for all $m\in U_0$, $\| dx\|_{h_{Th}}\leq C_x$.
\end{lemma}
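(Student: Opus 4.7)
The plan is to exploit the intrinsic definition of the Thurston metric $h_{Th}$ as the pointwise infimum of the Poincar\'e metrics $\lambda_D$ of round disks $D\subset U_0$ containing the point, attained on any maximal disk $D_m$ through $m$. Writing $\lambda_D=e^{2x_D}c_1$, one has $x\leq x_D$ globally on $D$, with equality at $m$ when $D=D_m$; at any differentiability point of $x$ this forces $dx(m)=dx_{D_m}(m)$, and since $h_{Th}(m)=\lambda_{D_m}(m)$ also $\|dx(m)\|_{h_{Th}}=\|dx_{D_m}(m)\|_{\lambda_{D_m}}$. The lemma therefore reduces to a uniform bound on $\|dx_D(m)\|_{\lambda_D}$ over all maximal disks $D\ni m$. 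The mild non-smoothness of $h_{Th}$ at points where several maximal disks compete is standard and harmless: either invoke the $C^{1,1}$-regularity of the Thurston metric, or use that an infimum of a family of smooth functions with uniformly bounded gradients is Lipschitz with the same bound.

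To estimate $\|dx_D(m)\|_{\lambda_D}$, I work in geodesic polar coordinates $(r,\theta)$ centered at the spherical center $c_D\in\CP^1$ of $D$. Both metrics are rotationally symmetric: $c_1=dr^2+\sin^2 r\,d\theta^2$ and, in its own polar coordinates, $\lambda_D=ds^2+\sinh^2 s\,d\theta^2$. Conformality of the two is equivalent to the ODE $s'(r)=\sinh s/\sin r$ with $e^{x_D}=s'(r)$, and with the boundary condition $s(\rho_D^-)=\infty$, where $\rho_D\in(0,\pi)$ is the spherical radius of $D$, this integrates to $\tanh(s/2)=\tan(r/2)\cot(\rho_D/2)$. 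A short direct differentiation then yields
\[
\|dx_D(m)\|_{\lambda_D}\;=\;\frac{\cosh s-\cos r}{\sinh s}~,
\]
where $r=d_{c_1}(m,c_D)$ and $s=d_{\lambda_D}(m,c_D)$.

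The decisive geometric input is the normalization $m_1\in CH(\CP^1\setminus U_0)$: it forces $\rho_D\leq\pi/2$ for every round disk $D\subset U_0$. Indeed, if $\rho_D>\pi/2$ then $\CP^1\setminus U_0\subset\CP^1\setminus D$ would lie in a spherical cap of radius $<\pi/2$, hence in an open visual hemisphere from $m_1$, and its hyperbolic convex hull would lie strictly on one side of the totally geodesic hyperplane through $m_1$ bounding that hemisphere, contradicting $m_1\in CH(\CP^1\setminus U_0)$. With $\rho_D\leq\pi/2$ and $r<\rho_D$ one has $r+\rho_D<\pi$, and a short manipulation of $\tanh(s/2)=\tan(r/2)\cot(\rho_D/2)$ shows this inequality is equivalent to $\cos r\geq e^{-s}$, i.e.\ $\cosh s-\cos r\leq\sinh s$. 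So $\|dx_D(m)\|_{\lambda_D}\leq 1$ uniformly in $D$ and $m$, and combined with the envelope reduction above this gives $\|dx\|_{h_{Th}}\leq 1$; in particular $C_x=1$ works.

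I do not foresee any serious obstacle. The heart of the argument is the geometric reduction $\rho_D\leq\pi/2$ coming from the normalization of $m_1$, which is precisely the threshold at which the elementary comparison between the round and hyperbolic densities along a rotationally symmetric radius yields the bound; everything else, including the envelope/Lipschitz argument and the explicit ODE computation, is routine once this observation has been made.
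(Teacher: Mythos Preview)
Your argument is correct and reaches the same conclusion as the paper, but by a genuinely different route.

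The paper factors the comparison through the visual metric $c_m$ at the nearest-point projection $m$ of $\xi$ onto the pleated surface $\Sigma_0=\partial CH(\CP^1\setminus U_0)$: it first shows $h_{Th}$ agrees with $c_m$ to first order at $\xi$ (so this step contributes nothing to $dx$), and then bounds the derivative of the conformal factor between the two visual metrics $c_m$ and $c_1=c_{m_1}$ using the explicit Busemann-type formula of Proposition~\ref{pr:m0m1}. The hypothesis $m_1\in CH(\CP^1\setminus U_0)$ enters as the geometric fact that the angle $\theta$ at $m$ between $[m,m_1]$ and $[m,\xi)$ is at least $\pi/2$, which makes the denominator $\coth\delta-\cos\theta$ in \eqref{eq:der} bounded below by $1$.

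You instead stay on the sphere: using the infimum description $h_{Th}=\inf_D\lambda_D$, you reduce via the envelope equality $dx(m)=dx_{D_m}(m)$ to a single rotationally symmetric disk computation, and extract the bound from the closed formula $\|dx_D\|_{\lambda_D}=(\cosh s-\cos r)/\sinh s$. The same hypothesis $m_1\in CH(\CP^1\setminus U_0)$ enters as the statement that every round disk in $U_0$ has spherical radius $\rho_D\le\pi/2$, which is exactly the threshold $r+\rho_D\le\pi$ needed for $\cos r\ge e^{-s}$. The two uses of the normalization are dual to each other (support half-space through $m_1$ versus visual hemisphere from $m_1$), and both give the same sharp constant $C_x=1$.

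What each buys: the paper's approach ties the estimate to the convex-hull geometry already in play elsewhere and avoids any regularity discussion of $h_{Th}$; your approach is more elementary and self-contained (no pleated surface, no Busemann computation), at the modest cost of invoking the $C^{1,1}$ regularity of the Thurston metric to justify the envelope step. Note that your alternative ``infimum of functions with uniformly bounded gradients is Lipschitz'' does not quite work as stated, since your gradient bounds are with respect to the varying metrics $\lambda_D$ rather than a fixed one; the $C^{1,1}$ route (or a localized version of the Lipschitz argument in $c_1$) is the clean way to close this.
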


\begin{proof}[Proof of Lemma \ref{lm:conformal-bounds}]
  The lemma clearly follows from Lemma \ref{lm:v}, Lemma \ref{lm:w} and Lemma \ref{lm:x}, since by construction $u=v+w+x$ and all three functions have bounded derivative (measured with respect to $h$) and $v$ and $w$ are bounded, so that $h$, $h_{-1}$ and $h_{Th}$ are within bounded multiplicative constant of each other.
\end{proof}

\begin{proof}[Proof of Lemma \ref{lm:v}]
  The upper bound on $v$ follows from the main result in \cite{ahlfors:extension}. The lower bound on $v$ is a consequence of the 1-dimensional case of the main result of \cite{yau:schwarz} (see also \cite[Theorem 1]{royden:ahlfors} or \cite[Theorem 6]{mateljevic2003ahlfors}).

  To obtain the bound on $dv$, we recall the equation satisfied by $v$ coming from the fact that $h_{-1}=e^{-2v}h$, see e.g. \cite[\S 1.159]{Be}:
  $$ \Delta_h v=K+e^{-2v}~, $$
  where $K$ is the curvature of $h$. Since both $K$ and $v$ are bounded, $\Delta_h v$ is bounded, and it then follows from standard arguments and from the bounds on $v$ and $\Delta_h v$ that $\| dv\|_h$ is bounded. Since $v$ is bounded, $\| dv\|_{h_{-1}}$ is also bounded.

  (We outline this estimate of $\| dv\|_h$ for completeness. Let $x\in U_0$, let $r>0$, and let $B(x,r)$ be the ball of center $x$ and radius $r$ in $(U_0, h)$. We can decompose the restriction of $v$ on $B(x,r)$ as
  $$ v=v_0+v_1~, $$
  where $v_0$ is the harmonic function on $B(x,r)$ with the same boundary function as $v$ on $\partial B(x,r)$, while $v_1$ is the solution of
  $$ \Delta_h v_1=\Delta_h v $$
  on $B(x,r)$ with Dirichlet boundary conditions. Then $\| dv_0(x)\|_h$ is bounded by a fixed constant by the gradient estimate for harmonic functions (because the boundary values of $v_0$ are bounded), while $\| dv_1(x)\|_h$ is bounded by a fixed constant, because $|\Delta_h v|$ is bounded on $B(x,r)$ and $v_1$ can be expressed from $\Delta_hv$ in terms of Green functions of $\Delta_h$ on $B(x,r)$.)
\end{proof}

\begin{proof}[Proof of Lemma \ref{lm:w}]
  We denote by $\DD$ the unit disk in $\C$. We first show that for all $k>1$, there exists a constant $c_w$ such that if $\cD$ is a $k$-quasidisk containing $\DD$ as a maximal disk, if $h_{-1}$ and $h_{Th}$ are the hyperbolic metric and Thurston metrics on $\cD$, respectively, with $h_{-1}=e^{2\bar w}h_{Th}$, then $|\bar w|\leq c_w$ and $\| d\bar w\|_h\leq c_w$ at $0$. This follows from a simple compactness argument. Assume on the contrary that there exists a sequence $\cD_n$ of $k$-quasicircles such that $|\bar w_n|\geq n$ or $\|d\bar w_n\|_{h_n} \geq n$ (where the notations with index $n$ are attached to the quasidisk $\cD_n$). Then by the compactness property of $k$-quasidisks (see e.g. \cite{ahlfors}) there exists a subsequence of $(\cD_n)_{n\in \N}$ which converges in the Hausdorff topology to a $k$-quasidisk $\cD_\infty$ containing $\DD$ as a maximal disk. Then $|\bar w_\infty|$ or $\| d\bar w_\infty\|_{h_\infty}$ would need to be infinite, a contradiction.

  Now let $m\in U_0$, and let $D_m$ be a maximal disk in $U_0$ containing $m$. There exists an element of $PSL(2,\C)$ sending $D_m$ to $\DD$ and $m$ to $0$. Both $h_{-1}$ and $h_{Th}$ are preserved under M\"obius transformation, and the lemma therefore follows from the estimate obtained at $0$ for quasidisks containing $\DD$ as a maximal disk.
\end{proof}

The proof of Lemma \ref{lm:x} requires a simple proposition concerning the comparison between visual metrics at two different points in $\HH^3$. 

\begin{proposition} \label{pr:m0m1}
  Let $m_0, m_1\in \HH^3$ be at distance $\delta>0$, and let $y:\partial_\infty\HH^3\to \R$ be such that $c_{m_1}=e^{2y}c_{m_0}$. 
  Let $\xi\in \partial_\infty\HH^3$ be such that the angle at $m_0$ between the segment ending at $m_1$ and the geodesic ray ending at $\xi$ is $\theta$. Then
  \begin{equation}
    \label{eq:cosh}
     y(\xi)=-\log(\cosh(\delta)-\cos(\theta)\sinh(\delta))~. 
  \end{equation}
  As a consequence, if $\partial_\theta$ is the vector at $\xi$ corresponding to varying $\theta$ while keeping $m_0, m_1$ and the plane containing $m_0, m_1$ and $\xi$ fixed, then
    \begin{equation}
      \label{eq:der}
      dy(\partial_\theta)=-\frac{\sin(\theta)}{\rm{cotanh}(\delta)-\cos(\theta)}~. 
    \end{equation}
\end{proposition}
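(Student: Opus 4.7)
The plan is to reduce the computation to the 2-dimensional totally geodesic hyperbolic plane $\Pi\subset\HH^3$ containing $m_0$, $m_1$ and $\xi$, and then to work explicitly in the upper half-plane model of $\Pi$. First I would observe that both visual metrics $c_{m_0}$ and $c_{m_1}$ lie in the M\"obius conformal structure on $\partial_\infty\HH^3$---since any isometry of $\HH^3$ acts as a M\"obius transformation at infinity---so the conformal factor $e^{2y}$ is a well-defined scalar at each ideal point. This reduces the problem to computing the factor in one tangent direction; choosing $\partial_\theta$, which is tangent to $\partial_\infty\Pi$, we have $e^y=|d\psi/d\theta|$, where $\theta$ and $\psi$ parametrize $\partial_\infty\Pi$ by angle at $m_0$ and at $m_1$, respectively.

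Next, I would set up the upper half-plane model of $\Pi$ with the geodesic from $m_0$ to $m_1$ as the vertical axis and $m_0=(0,1)$, $m_1=(0,e^{-\delta})$. For a finite ideal point $\xi=(R,0)$, the semicircular geodesics from $m_0$ and from $m_1$ to $\xi$ can be computed explicitly; they yield
\[
R=\tan(\theta/2), \qquad R=e^{-\delta}\cot(\psi/2),
\]
where $\theta$ is the angle at $m_0$ between the segment $m_0m_1$ and the ray $m_0\xi$, and $\psi$ is the analogous angle at $m_1$. Eliminating $R$ produces the symmetric relation
\[
\tan(\theta/2)\,\tan(\psi/2)=e^{-\delta},
\]
and implicit differentiation gives $d\psi/d\theta=-\sin\psi/\sin\theta$.

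With the half-angle substitution $t=\tan(\theta/2)$ (so $\tan(\psi/2)=e^{-\delta}/t$), a short manipulation using $\sin\theta=2t/(1+t^2)$ and its analogue for $\sin\psi$ yields
\[
\frac{\sin\psi}{\sin\theta}=\frac{e^{-\delta}(1+t^2)}{t^2+e^{-2\delta}};
\]
substituting $\cos\theta=(1-t^2)/(1+t^2)$ identifies the right-hand side with $1/(\cosh\delta-\cos\theta\sinh\delta)$, and taking logarithms gives formula (\ref{eq:cosh}). Equation (\ref{eq:der}) then follows by directly differentiating $y=-\log(\cosh\delta-\cos\theta\sinh\delta)$ and using the identity $\cosh\delta-\cos\theta\sinh\delta=\sinh\delta\,(\mathrm{cotanh}(\delta)-\cos\theta)$.

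The only step that requires real thought is the initial reduction to a 2-plane: one must recognize that the two visual metrics are conformally equivalent on $\partial_\infty\HH^3$, so that a single scalar $e^y$ captures the full conformal factor and it suffices to compute $d\psi/d\theta$ along a single circle at infinity. Past this point the argument is routine bookkeeping with standard half-angle formulas.
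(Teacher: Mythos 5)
Your argument is correct, and all the intermediate formulas check out: in the upper half-plane model of $\Pi$ with $m_0=(0,1)$, $m_1=(0,e^{-\delta})$, $\xi=(R,0)$, one indeed gets $\cos\theta=(1-R^2)/(1+R^2)$, hence $R=\tan(\theta/2)$, and similarly $R=e^{-\delta}\cot(\psi/2)$; the relation $\tan(\theta/2)\tan(\psi/2)=e^{-\delta}$, the derivative $d\psi/d\theta=-\sin\psi/\sin\theta$, and the identification of $\sin\psi/\sin\theta$ with $(\cosh\delta-\cos\theta\sinh\delta)^{-1}$ all follow as you say. The initial reduction is also sound: since each visual metric is a round metric in the M\"obius conformal class of $\partial_\infty\HH^3$, the factor $e^{2y}$ is a pointwise scalar, and since $\partial_\theta$ is the arc-length parameter of the great circle $\partial_\infty\Pi$ for $c_{m_0}$ (and $\psi$ plays the same role for $c_{m_1}$), computing $|d\psi/d\theta|$ in that one direction determines $y$.

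Your route is, however, genuinely different from the paper's. The paper obtains \eqref{eq:cosh} from the Busemann function of $\xi$: it applies the hyperbolic law of cosines to the triangle $m_0,m_1,m(t)$, where $m(t)$ is the point at distance $t$ along the ray $[m_0,\xi)$, extracts the limit $e^{l(t)-t}\to\cosh\delta-\cos\theta\sinh\delta$, and then invokes the standard relation between the Busemann function and the conformal factor of visual metrics. That argument is shorter but leans on an unstated identity ($y=-\lim_t(l(t)-t)$); yours replaces it by an explicit coordinate computation of the angle-reparametrization $\theta\mapsto\psi$ between the two viewpoints, which is entirely self-contained and verifies the constant in front of the logarithm directly (e.g.\ at $\theta=0$ it gives $y=\delta$, as it should). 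The one point worth making explicit in a written version is the claim $e^{y}=|d\psi/d\theta|$, i.e.\ that the restriction of $c_{m}$ to $\partial_\infty\Pi$ is $d\theta^2$ in the angle parametrization at $m$; this is immediate from the definition of the visual metric as the round metric on the sphere of directions, but it is the hinge on which the whole computation turns. The final step, differentiating $y=-\log(\cosh\delta-\cos\theta\sinh\delta)$ and factoring out $\sinh\delta$ to get $-\sin\theta/(\coth\delta-\cos\theta)$, coincides with the paper's derivation of \eqref{eq:der}.
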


\begin{proof}
  The first point is a consequence of a standard hyperbolic geometry formula relating the values at $m_0$ and $m_1$ of the Busemann function associated to $\xi$. Namely, if $m(t)$ is the point at distance $t$ from $m_0$ along the ray $[m_0,\xi)$, and if $l(t)=d(m_1, m(t))$ then the triangle formula can be written as:
    $$  \cosh(l(t))=\cosh(t)\cosh(\delta)-\cos(\theta)\sinh(t)\sinh(\delta)~. $$
    As $t\to \infty$, it follows that
    $$ e^{l(t)-t} \sim \cosh(\delta)-\cos(\theta)\sinh(\delta)~, $$
  and the first point follows.

  The second point follows from the first by a direct computation.
\end{proof}

The Thurston metric has a simple relation to the visual metric, stated for the reader's convenience in the next proposition. We denote by $\Sigma_0$ the pleated locally convex surface facing $U_0$, defined as:
$$ \Sigma_0 = \partial CH(\CP^1\setminus U_0)~, $$
where $CH$ denotes the hyperbolic convex hull. 

\begin{proposition} \label{pr:Thm}
  Let $\xi\in U_0$, and let $m$ be the closest-point projection of $\xi$ on $\Sigma_0$. Then
  $$ h_{Th}(\xi)=c_m(\xi)~. $$
\end{proposition}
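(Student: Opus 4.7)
The plan is to identify $m$ with the foot of a hyperbolic perpendicular from $\xi$ to a canonical support plane, and then verify the equality in a single normalized configuration using the invariance of both sides under $\Isom^+(\HH^3)$.

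The first step uses the standard correspondence between points of $\Sigma_0$ and maximal disks in $U_0$ (Epstein--Marden): any $m\in\Sigma_0$ lies on at least one totally geodesic support plane $P$ of $CH(\CP^1\setminus U_0)$, and the open disk $D\subset\CP^1$ bounded by $\partial_\infty P$ on the side opposite to the convex hull is a maximal disk in $U_0$, containing $\xi$. Thus $h_{Th}(\xi)$ is the Poincar\'e hyperbolic metric on $D$ evaluated at $\xi$. Moreover, since $P$ separates $\xi$ from $CH(\CP^1\setminus U_0)$, the closest-point projection of $\xi$ onto the convex hull coincides with its projection onto $P$; so $m$ is the foot of the hyperbolic perpendicular from $\xi$ to $P$.

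The second step uses equivariance. Both $c_m$ and $h_{Th}$ transform naturally under $\Isom^+(\HH^3)=PSL(2,\C)$: for any isometry $g$, $c_{g(m)}=(g^{-1})^* c_m$ on $\partial_\infty\HH^3$, and $g$ takes the Thurston metric for $U_0$ to the Thurston metric for $g(U_0)$. Since $PSL(2,\C)$ acts transitively on triples (point in $\HH^3$, totally geodesic plane through it, chosen side of the plane), it suffices to verify the identity in one normalized configuration.

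The third step is the model computation. In the upper half-space model, take $P=\{y=0\}\cap\HH^3$, $m=(0,0,1)$, and $D=\{(x,y,0):y>0\}$. The geodesic through $m$ perpendicular to $P$ is the half-circle of radius $1$ centered at the origin in the plane $\{x=0\}$, ending on the $D$-side at $\xi=(0,1,0)$; this confirms the compatibility of $(m,P,\xi)$ with the normalization. The Poincar\'e metric of $D$ at $\xi$ is $y^{-2}(dx^2+dy^2)|_{y=1}=dx^2+dy^2$. The visual metric in the upper half-space model, obtained by pulling back the round metric via the isometry to the ball model sending $m$ to the origin, is
\[
c_m = \frac{4 z_m^2}{\bigl((x-x_m)^2+(y-y_m)^2+z_m^2\bigr)^2}\,(dx^2+dy^2)
\]
at $(x,y,0)\in\partial_\infty\HH^3$. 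Substituting $m=(0,0,1)$ and $\xi=(0,1,0)$ gives $c_m(\xi)=\tfrac{4}{4}(dx^2+dy^2)=dx^2+dy^2$, matching $h_{Th}(\xi)$.

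The main difficulty is conceptual rather than computational: one must correctly identify $m$ as the foot of the hyperbolic perpendicular to the canonical support plane determining the maximal disk used to compute $h_{Th}(\xi)$. Once this is in place, invariance reduces the statement to the explicit model calculation above.
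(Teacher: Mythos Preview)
Your proof is correct and follows essentially the same route as the paper: both identify the support plane $P$ of $CH(\CP^1\setminus U_0)$ at $m$ orthogonal to the ray $[m,\xi)$, observe that the associated maximal disk $D$ computes $h_{Th}(\xi)$, and then verify $h_{Th}(\xi)=c_m(\xi)$ by a normalized computation---the paper in the Poincar\'e ball model with $m$ at the center, you in the upper half-space model. One small point: in your first step you should specify from the outset that $P$ is the support plane orthogonal to $[m,\xi)$ (this is what makes $m$ the foot of the perpendicular from $\xi$ to $P$, not the separation property alone), exactly as the paper does.
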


\begin{proof}
  Let $P$ be the support plane of $CH(\CP^1\setminus U_0)$ at $m$ orthogonal to the direction of the geodesic ray from $m$ to $\xi$. This plane $P$ faces a disk in $\CP^1$ (with the same boundary) which is a maximal disk contained in $U_0$ and containing $\xi$. By construction, $h_{Th}$ is equal at $\xi$ to the push-forward of the induced metric on $P$ by the normal exponential map, considered as a map from $P$ to $U_0$.

  However a simple computation (for instance done in the Poincaré model of $\HH^3$ with $m$ at the center) shows that this push-forward metric coincides at $\xi$ with the visual metric $c_m$, and the proposition follows.
\end{proof}

\begin{proof}[Proof of Lemma \ref{lm:x}]
    To obtain the required comparison, we choose a point $\xi\in U_0$ and let $m$ be the closest-point projection of $\xi$ on $\Sigma_0$, as above. We first compare $h_{Th}$ to $c_m$ at first-order at $\xi$, and then compare $c_m$ to $c_1$. Both comparisons will be based on \eqref{eq:cosh}.

    For the first comparison, we again denote by $u:U_0\to \R$ the function such that $h_{Th}=e^{2u}c_m$, and will show that $du=0$ at $\xi$. Consider first a first-order variation of $\xi$ obtained by varying $m$ along a flat direction of $\Sigma_0$ while keeping parallel the direction at $m$ of the closest-point projection line. Let $\partial_\delta$ be such a unit (for $c_m$) tangent vector to $\CP^1$ at $\xi$. Then taking the derivative of \eqref{eq:cosh} with respect to $\delta$ at $\delta=0$ and $\theta=\pi/2$ shows that $dy(\partial_\delta)=0$. Now if $m$ is located on a pleating line of $\Sigma_0$ and $\partial_\theta$ is a vector corresponding to varying $\theta$ while the projection to $m$ remains constant, then taking the derivative of \eqref{eq:cosh} with respect to $\theta$ with $\delta=0$ and $\theta=\pi/2$ also shows that $dy(\partial_\theta)=0$. So we can conclude that $dy=0$ at $\xi$, so that $h_{Th}$ coincides with $c_m$ at $\xi$ at first order.

  The second step is to compare at $\xi$ the metric $c_m$ -- which as we have seen is equal to $h_{Th}$ at first order -- to the spherical metric $c_1$, which as any spherical metric on $\partial_\infty \HH^3$ is the visual metric of some fixed point $m_1\in \HH^3$. Specifically, if we denote (again) by $y:\partial_\infty\HH^3\to \R$ the function such that $c_m=e^{2y}c_{m_1}$, Proposition \ref{pr:m0m1} shows that the derivative of $y$ at $\xi$ is bounded by the right-hand side of \eqref{eq:der}, with $m_0=m$. However a look at this second term shows that it is bounded from above in absolute value as soon as $\theta$ is bounded from below, because $\rm{cotanh}(\delta)-\cos(\theta)$ is then bounded from below since $\rm{cotanh}(\delta)>1$. 

  But since we have assumed that $m_1\in CH(\CP^1\setminus U_0)$, for all $\xi\in U_0$, if $m$ again denotes the closest-point projection of $\xi$ on $\Sigma_0$, $m_1$ is on the negative side of the support plane to $CH(\CP^1\setminus U_0)$ at $m$ orthogonal to the geodesic ray from $m$ to $\xi$. (That is, $m_1$ is on the side of this plane not containing $\xi$.) It follows that the angle $\theta$ occuring in \eqref{eq:der} is larger than $\pi/2$. This concludes the proof.
\end{proof}

\subsection{Construction of approximating metrics}
\label{ssc:construction}

We can now define rather explicitly the sequence of approximating metrics that will be used in the proof. The construction uses the choice of a ``cut-off'' function.

\begin{lemma} \label{lm:phi}
  There exists a smooth function $\phi:\R\to \R$ such that:
  \begin{enumerate}
  \item $\phi(x)=x$ for $x\leq 0$,
  \item $\phi(x)=1$ for $x\geq 2$,
  \item $\phi'(x)\in [0,1]$ and $\phi''(x)\leq 0$ for all $x\in [0,2]$,
  \item $\phi'(x) \leq e^{2(\phi(x)-x)}$ for all $x\in [0,2]$.
  \end{enumerate}
\end{lemma}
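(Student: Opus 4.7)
The plan is to construct $\phi$ explicitly by mollifying a piecewise-linear ramp. I pick a symmetric non-negative bump $\rho\in C_c^\infty(\R)$ supported in $[-1,1]$ with $\int\rho=1$, set $\rho_\eta(t)=\eta^{-1}\rho(t/\eta)$, and define
$$\alpha(x)=\bigl(\max(\,\cdot\,-1,0)*\rho_\eta\bigr)(x), \qquad \phi(x)=x-\alpha(x).$$
I will then show that all four conditions hold once $\eta>0$ is chosen sufficiently small.

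Conditions (1)--(3) should fall out from standard properties of mollification. Since $\max(t-1,0)$ is convex and $\rho_\eta\geq 0$, the convolution $\alpha$ is smooth and convex. Symmetry of $\rho_\eta$, ensuring $\int y\,\rho_\eta(y)\,dy=0$, gives $\alpha(x)=0$ for $x\leq 1-\eta$ and $\alpha(x)=x-1$ for $x\geq 1+\eta$; hence for $\eta\leq 1$ one has $\phi(x)=x$ on $(-\infty,0]$ and $\phi(x)=1$ on $[2,\infty)$, establishing (1) and (2). Moreover $\alpha'(x)=\int_{-\eta}^{x-1}\rho_\eta(y)\,dy\in[0,1]$ and $\alpha''(x)=\rho_\eta(x-1)\geq 0$, so $\phi'=1-\alpha'\in[0,1]$ and $\phi''=-\alpha''\leq 0$, which is (3).

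The real content is (4), equivalent to $\alpha'(x)\geq 1-e^{-2\alpha(x)}$, and automatic outside $[1-\eta,1+\eta]$ (there $\alpha$ and $\alpha'$ are either both zero or $\alpha'=1$). Inside the transition interval the key observation is that convexity of $\alpha$ makes $\alpha'$ non-decreasing, so for $x\in[1-\eta,1+\eta]$
$$\alpha(x)=\int_{1-\eta}^x\alpha'(t)\,dt\leq\alpha'(x)\,(x-1+\eta)\leq 2\eta\,\alpha'(x).$$
Combining this with the elementary bound $1-e^{-y}\leq y$ for $y\geq 0$ gives $1-e^{-2\alpha(x)}\leq 2\alpha(x)\leq 4\eta\,\alpha'(x)$, so any $\eta\leq 1/4$ forces (4). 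The main conceptual obstacle is precisely the choice of template: the smoothing has to be placed at a single interior break point (here $x=1$) so that $\alpha$ remains globally convex on $\R$, because it is this convexity, via the monotonicity of $\alpha'$, that one uses to bootstrap (4) from the simple inequality $1-e^{-y}\leq y$.
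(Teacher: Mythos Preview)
Your construction is correct. The convexity of $\alpha$ indeed gives $\alpha(x)\le 2\eta\,\alpha'(x)$ on the transition interval, and combined with $1-e^{-y}\le y$ this cleanly yields condition~(4) for $\eta\le 1/4$; conditions (1)--(3) and smoothness follow from standard mollifier properties as you state.

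Both the paper and you place a narrow transition around $x=1$ from the identity to the constant~$1$, and both rely on the narrowness to force~(4). The paper's argument is only a sketch: it observes that $f(x)=x$ satisfies (4) with equality on $[0,1]$, claims one can therefore find a function satisfying the conditions on $(-\infty,1-\epsilon)$ that is arbitrarily close to the identity, and then asserts one can ``bridge'' to the constant~$1$ on $[1+\epsilon,\infty)$ for $\epsilon$ small. Your approach is a single explicit step --- mollify the piecewise linear profile $x-\max(x-1,0)$ --- and your verification of~(4) via the inequality $\alpha\le 2\eta\,\alpha'$ coming from convexity of $\alpha$ is a genuine (and cleaner) argument, not just an existence claim. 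What the paper's viewpoint buys is perhaps a bit more flexibility in the shape of the transition, but your construction buys an actual proof with an explicit $\eta$, and it makes transparent why concavity of $\phi$ is the mechanism that drives~(4).
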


\begin{proof}
  Note first that $f(x)=x$ is a solution of the differential equation
  $$ f'(x) \leq e^{2(f(x)-x)} $$
  over $[0,1]$. As a consequence, one can find a function $\phi$ satisfying the conditions above over $(-\infty, 1-\epsilon)$ and arbitrarily close to $f(x)=x$, for any $\epsilon>0$. Taking $\epsilon$ small enough, one can then ``bridge'' this function to $f(x)=1$ for $x\geq 1+\epsilon$ while still satisfying the conditions of the lemma.
\end{proof}

\begin{definition}
  We choose a function $\phi$ satisfying the conditions of Lemma \ref{lm:phi}.
  For all $n\in \N$, we define $\phi_n:\R\to \R$ by 
  $$ \phi_n(x) = n+\phi(x-n)~. $$
\end{definition}

For the next definition, we need to remark that $\CP^1\setminus U$ has at least 2 points, because $U$ admits a complete, conformal metric of negative curvature. Therefore the convex hull of $\CP^1\setminus U$ contains a point $m_1$. 

\begin{definition} \label{df:hn}
  Let $(U,h)$ be a boundary data on $\CP^1$, let $c_1=c_{m_1}$ be the visual metric of a point $m_1\in CH(\CP^1\setminus U)$, and let $u:U\to \R$ be such that $h=e^{2u}c_1$. We define a sequence of conformal metrics $(h_n)_{n\in \N}$ on $\CP^1$ by
  \begin{eqnarray*}
    h_n=e^{2\phi_n(u)} c_1 & & \mbox{on $U$} \\
    h_n=e^{2(n+1)}c_1 & & \mbox{on $\CP^1\setminus U$}~.
  \end{eqnarray*}
\end{definition}

Note that $h_n$ is a smooth metric on $\CP^1$ since, by definition of $\phi$, $\phi_n(u)$ is equal to $n+1$ in a neighborhood of the boundary of $U$. (It follows from the fact that $h$ is complete on $U$ together with Lemma \ref{lm:conformal-bounds} (for from Lemma \ref{lm:v}) that $u\to \infty$ on $\partial U$.)

\begin{lemma} \label{lm:Kmax}
  Assume that $K\in [-1+\epsilon,-\epsilon]$ with derivatives uniformly bounded (with respect to $h$), that $u$ is bounded from below, and that $\| du\|_h$ is bounded from above, on $U$.
  There exists a constant $K_{max}>0$ (depending on those bounds) such that the curvature $K_n$ of $h_n$ is:
  \begin{itemize}
  \item equal to $K$ on the set of points $x\in U$ where $u(x)\leq n$,
  \item in the interval $[-1+\epsilon, K_{max}]$ on the set of points $x\in U$ where $n\leq u(x)\leq n+2$, 
  \item equal to $e^{-2(n+1)}$ on the set of points  $x\in U$ where $u(x)\geq n+2$ and on the complement of $U$. 
  \end{itemize}
  Moreover, the metrics $h_n$ have bounded derivatives, for constants $C_k$ which do not depend on $n$.
\end{lemma}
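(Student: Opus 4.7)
The strategy is to handle the three regions separately, with the transition region $\{n \leq u \leq n+2\}$ being the only nontrivial case. The analysis there reduces to a single computation via Lemma \ref{lm:conformal}, and the bound $K_n \geq -1 + \epsilon$ will be forced by the peculiar-looking condition (4) in Lemma \ref{lm:phi}.

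First, the two easy regions. On $\{u(x) \leq n\}$ we have $u(x) - n \leq 0$, so $\phi_n(u) = n + \phi(u-n) = n + (u-n) = u$ by property (1) of $\phi$. Hence $h_n = h$ and $K_n = K$ on this set. On $\{u(x) \geq n+2\} \cup (\CP^1 \setminus U)$, property (2) gives $\phi_n(u) = n+1$, so $h_n = e^{2(n+1)} c_1$ everywhere on this set (the two formulas in Definition \ref{df:hn} agree here, which is what ensures smoothness of $h_n$ across $\partial U$). Since $c_1$ is the visual metric of a point in $\HH^3$, it is a round metric of constant curvature $1$, and a constant conformal rescaling by $e^{2(n+1)}$ divides this curvature by $e^{2(n+1)}$, giving $K_n = e^{-2(n+1)}$ as claimed.

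The key step is the transition region $\{n \leq u \leq n+2\}$. Writing $\tau = u - n \in [0,2]$, I would apply Lemma \ref{lm:conformal} to $h_n = e^{2\phi_n(u)} c_1$, using that $c_1$ has curvature $1$:
\[ K_n = e^{-2\phi_n(u)}\bigl(\Delta_{c_1}\phi_n(u) + 1\bigr). \]
The chain rule for the Laplacian (noting $\Delta_{c_1} u = Ke^{2u} - 1$ from applying the same formula to $h = e^{2u}c_1$, and $\|du\|_{c_1}^2 = e^{2u}\|du\|_h^2$) yields, after a short computation,
\[
  K_n \;=\; e^{-2(\phi(\tau)-\tau)}\,\bigl[K\phi'(\tau) - \phi''(\tau)\|du\|_h^2\bigr] \;+\; e^{-2(n+\phi(\tau))}\bigl(1 - \phi'(\tau)\bigr).
\]
Now each term can be read off from the four properties of $\phi$. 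For the \emph{lower} bound: since $\phi'(\tau) \in [0,1]$, property (4) is exactly the statement $e^{-2(\phi(\tau)-\tau)}\phi'(\tau) \leq 1$; multiplying by the negative number $K$ flips the inequality and gives $e^{-2(\phi(\tau)-\tau)}K\phi'(\tau) \geq K \geq -1+\epsilon$. The term $-\phi''(\tau)\|du\|_h^2$ is $\geq 0$ by property (3), and the term $e^{-2(n+\phi(\tau))}(1-\phi'(\tau))$ is $\geq 0$ by (3). Hence $K_n \geq -1 + \epsilon$.

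For the \emph{upper} bound, note that $\phi(\tau) - \tau \in [-1,0]$ on $[0,2]$ (it starts at $0$ and has non-positive derivative by (3)), so $e^{-2(\phi(\tau)-\tau)} \leq e^{2}$. The first bracket is bounded above by $|\phi''|_\infty \cdot \sup_U \|du\|_h^2$ (since $K\phi'(\tau) \leq 0$), and the second term is bounded above by $1$. Combining, $K_n \leq K_{max}$ for a constant $K_{max}$ depending only on the bound on $\|du\|_h$ (and on the fixed function $\phi$). The hypothesis that $u$ is bounded from below is used only to guarantee that $\|du\|_h$ is a meaningful uniform bound on the whole transition set as $n$ varies.

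The main conceptual obstacle is essentially absent once one sees that condition (4) of Lemma \ref{lm:phi} was engineered precisely to yield the inequality $e^{-2(\phi(\tau)-\tau)} K\phi'(\tau) \geq K$; the only real work is the careful bookkeeping of the conformal change formula and of the signs in the chain rule for the Laplacian.
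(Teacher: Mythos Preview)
Your proof is correct and follows essentially the same approach as the paper: the same decomposition into three regions, the same curvature formula obtained via the chain rule for $\Delta_{c_1}(\phi_n\circ u)$, and the same use of properties (1)--(4) of $\phi$ to bound each of the three terms. Your closing remark about the role of the lower bound on $u$ is a bit off (neither your argument nor the paper's actually uses it in the estimates on the transition region), but this does not affect the validity of the proof.
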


\begin{proof}
  In the region of $U$ where $u(x)\leq n$, $h_n=h$, and therefore $K_n=K$, the curvature of $h$. On the region of $U$ where $u(x)\geq n+2$, and on the complement of $U$, $h_n=e^{2(n+1)}c_1$, and it follows from Lemma \ref{lm:conformal} that $K_n=e^{-2(n+1)}$.
  
  We now focus on the region where $n\leq u(x)\leq n+2$.
  The conformal transformation of curvature (Lemma \ref{lm:conformal}) also shows that
  $$ K_n = e^{-2\phi_n(u)}(\Delta_{c_1}\phi_n(u)) + 1)~. $$
  However
  $$ d(\phi_n(u)) = \phi'_n(u) du~, $$
  so that
  $$ Dd(\phi_n\circ u) = \phi'_n(u) Ddu + (\phi''_n\circ u) du\otimes du~, $$
  and therefore
  $$ \Delta_{c_1}(\phi_n\circ u) = (\phi'_n\circ u) \Delta_{c_1}u - (\phi''_n\circ u) \| du\|^2_{c_1}~. $$
  As a consequence, and since $\| du\|^2_{c_1}=e^{2u}\| du\|^2_h$:
  \begin{eqnarray*}
    K_n & = & e^{-2\phi_n\circ u} ((\phi'_n\circ u) \Delta_{c_1}u - (\phi''_n\circ u) \| du\|^2_{c_1} +1) \\
        & = & (\phi'_n\circ u) e^{2(u-\phi_n\circ u)} e^{-2u} (\Delta_{c_1}u+1) + e^{-2\phi_n\circ u}(1-(\phi'_n\circ u)) - e^{2(u-\phi_n\circ u)} (\phi''_n\circ u) \| du\|^2_h \\
    & = & (\phi'_n\circ u) e^{2(u-\phi_n\circ u)} K + e^{-2\phi_n\circ u}(1-(\phi'_n\circ u)) - e^{2(u-\phi_n\circ u)} (\phi''_n\circ u) \| du\|^2_h~.
  \end{eqnarray*}
  By the hypothesis, $K\in [-1+\epsilon,-\epsilon]$, and by point (4) in the choice of $\phi$, $0\leq (\phi'_n\circ u) e^{2(u-\phi_n\circ u)}\leq1$, so the first term is in $[-1+\epsilon,0]$. By point (3) in the choice of $\phi$, the second term is non-negative, and it is uniformly bounded because by construction, $e^{-2\phi_n\circ u}$ is bounded from above. Finally, the third term is also non-negative by point (3) in the choice of $\phi$ and uniformly bounded by the fact that $u-\phi_n\circ u$ is bounded from above and the uniform bound on $\| du\|_h$. The first part of the result follows.

  For the second part, note that the metric $c_1$ can be written as $c_1=e^{-2u}h$. It thus follows from Lemma \ref{lm:conformal} that
  $$ 1 = e^{2u}(-\Delta_hu+K)~, $$
  so that
  $$ \Delta_hu = e^{-2u}+K~. $$
  Elliptic regularity and a standard bootstrapping argument, together with the hypothesis that $\| du\|_h$ is uniformly bounded, then shows that all derivatives of $u$ are uniformly bounded with respect to $h$.

  We can then deduce from those estimates on $u$ that similar estimates also apply to $\phi_n(u)$, that is, its derivatives are uniformly bounded with respect to $h_n$. Indeed, this is clear in the region where $u\leq n$, and also in the region where $u\geq n+2$. In the region where $n\leq u\leq n+2$, it follows from the definition of $\phi_n$ (and from the uniform bound on $\| du_n\|$) that the derivatives of $\phi_n(u)$ are uniformly bounded with respect to $h$. Since $e^{-4}h\leq h_n\leq h$ when $n\leq u\leq n+1$, the derivatives of $\phi_n(u)$ are also uniformly bounded with respect to $h_n$. Lemma \ref{lm:conformal} then shows that the derivatives of $K_n$ are uniformly bounded with respect to $h_n$. 
\end{proof}

\subsection{A lower bound on the injectivity radius}
\label{ssc:bounds}

In addition to the upper bound on the curvature, the next section will make an essential use of a lower bound on the injectivity radius of the metrics $h_n$, as stated in the next lemma.

\begin{lemma} \label{lm:inj}
  There exists $l_0>0$ such that the injectivity radius of $h_n$ is bounded from below by $l_0$.  
\end{lemma}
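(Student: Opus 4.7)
The metric $h_n$ splits $\CP^1$ into three natural regions according to Definition \ref{df:hn}: the \emph{bulk} $\{u \leq n-1\}$ where $h_n=h$; the \emph{transition} $\{n-1 < u < n+3\}$; and the \emph{round cap} $\{u \geq n+3\} \cup (\CP^1 \setminus U)$ where $h_n = e^{2(n+1)} c_1$. The upper curvature bound $K_n \leq K^*$ from Lemma \ref{lm:Kmax} already yields, via Rauch comparison, a uniform lower bound $\pi/\sqrt{K^*}$ on the conjugate radius. The task is thus to lower-bound the length of the shortest geodesic loop at every point, uniformly in $n$.

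From the gradient estimate $\| du\|_h \leq C$ of Lemma \ref{lm:conformal-bounds}, direct integration along each region shows that the three zones are separated by a uniform $h_n$-distance $d_0 > 0$ independent of $n$. If $p$ lies in the bulk, any geodesic loop of length less than $2d_0$ stays in $\{u \leq n\}$, where $h_n = h$ is a complete simply connected metric of curvature $\leq -\epsilon$ on the quasidisk component $U_0$; by Cartan--Hadamard this admits no geodesic loops at all. If $p$ lies in the round cap at $h_n$-distance $\geq d_0$ from the transition, any short loop stays in a piece of a round sphere of radius $e^{n+1}$, and only a full great circle of length $2\pi e^{n+1}$ could close up. Either way, the shortest geodesic loop at $p$ has length at least $2d_0$.

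For $p$ in or near the transition, I combine Gauss--Bonnet and Bol's isoperimetric inequality. A geodesic loop $\gamma$ of length $\ell$ bounds two disks $D_1, D_2$ in $\CP^1$; Gauss--Bonnet gives $\int_{D_i} K_n \, dA = \pi + \alpha_i$ with $\alpha_1 + \alpha_2 = 2\pi$, hence $\area_{h_n}(D_i) \geq \pi/K^*$ from $K_n \leq K^*$. Bol's inequality $\ell^2 \geq 4\pi A - K^* A^2$ then forces each area to lie in $[0, A_-] \cup [A_+, \infty)$ with $A_\pm = (2\pi \pm \sqrt{4\pi^2 - K^*\ell^2})/K^*$. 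If $A_i \leq A_-$ for some $i$, combining with $A_i \geq \pi/K^*$ yields $\ell \geq \pi\sqrt{3/K^*}$. Otherwise both $A_i \geq A_+$, in which case $\gamma$ must encircle the transition annulus in $(\CP^1, h_n)$, whose $h_n$-circumference is bounded below by a uniform positive factor times the $c_1$-length of $\partial U_0$, using the gradient bound on $u$ to control the conformal factor on nearby level sets of $u$. Combining all cases yields $\inj(h_n) \geq l_0 > 0$.

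The main obstacle is the last sub-case, where both $A_1, A_2$ are large: one must show that the loop cannot avoid encircling the transition annulus, and estimate from below the $h_n$-length of any essential loop in this annulus. This requires the precise Lipschitz control on $u$, so as to compare $h_n$- and $c_1$-lengths of level sets of $u$ near $\partial U_0$ uniformly in $n$; all other cases reduce essentially to the Hadamard property in the bulk and to the large-sphere geometry in the cap.
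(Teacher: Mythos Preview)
Your overall strategy---Gauss--Bonnet forces each complementary disk to have $h_n$-area at least $\pi/K_{\max}$, then an isoperimetric inequality bounds $\ell$ from below---is the same as the paper's, and your bulk and deep-cap cases are fine. The gap is exactly where you flag it: the ``both $A_i\geq A_+$'' branch of Bol's inequality in the transition case. Your claim that $\gamma$ must then encircle the transition annulus is not justified: the region $\{n-1<u<n+3\}$ in a component $U_0$ has $h_n$-area comparable to its $h$-area, which is unbounded in $n$, so a disk bounded by $\gamma$ and lying inside that region can perfectly well have area $\geq A_+$. Even granting the encircling, your proposed lower bound ``a uniform positive factor times the $c_1$-length of $\partial U_0$'' is not uniform: there is no positive lower bound on the $c_1$-size of $\partial U_0$ across the possibly infinitely many components of $U$, a quasicircle may have infinite $c_1$-length, and for $n$ near $\min_{U_0}u$ the set $\{n-1<u<n+3\}$ is not even an annulus.

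The paper avoids all of this by a different organization: it splits on whether $\gamma$ meets $\{u\leq n\}$, and in each case transfers the isoperimetric inequality to a \emph{fixed} background metric rather than applying Bol to $h_n$. If $\gamma\cap\{u\leq n\}=\emptyset$, then $h_n\geq e^{2n}c_1$ along $\gamma$ while $h_n\leq e^{2(n+1)}c_1$ everywhere, so $L_{c_1}(\gamma)\leq e^{-n}\ell$ but each disk has $c_1$-area at least $e^{-2(n+1)}\pi/K_{\max}$; the spherical isoperimetric inequality for $c_1$ (total area $4\pi$, so the ``both large'' escape route is closed) then bounds $\ell$. If $\gamma$ meets $\{u\leq n\}$, the gradient bound confines $\gamma$ to a single quasidisk $U_0$, where $h\geq h_n$; the disk $\gamma$ bounds in $U_0$ then has $h$-area at least $\pi/K_{\max}$, and the isoperimetric inequality for the fixed complete metric $h$ on $U_0$ finishes. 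Passing to $c_1$ or to $h$ is what fixes the total area and eliminates the problematic Bol branch.
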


We introduce additional notations for the proof, namely a decomposition of $\CP^1$ in three subsets:
\begin{itemize}
\item $U_{u\leq n}$ is the set of points $x\in U$ where $u(x)\leq n$,
\item $U_{n<u\leq n+1}$ is the set of points $x\in U$ where $n<u(x)\leq n+1$,
\item $U_{u\leq n+1}=U_{u\leq n}\cup U_{n<u\leq n+1}$,
\item $U_{n+1<u}$ is the union of $\CP^1\setminus U$ with the set of points $x\in U$ where $n+1<u(x)$.
\end{itemize}

\begin{proof}[Proof of Lemma \ref{lm:inj}]
  We consider a short geodesic $\gamma$ of length $l$ for the metric $h_n$, and will show that $l$ cannot be smaller than $l_0$, for a value of $l_0$ that will be determined. 

  The proof will consider two cases, with a similar argument used for a different background metric in each case. In each case we will use that any disk $\Omega$ with $\partial\Omega=\gamma$ must have total curvature $\pi$ (by the Gauss-Bonnet Theorem) and thus, since $h_n$ has curvature $K\leq K_{max}$, area bounded from below: $A_{h_n}(\Omega)\geq \pi/K_{max}$.

  \medskip

  {\em First case: $\gamma\cap U_{u\leq n}=\emptyset$.}
  In this case $u\geq n$ on $\gamma$, and we compare $h_n$ to the metric $e^{2n}c_1$. We have $h_n\geq e^{2n}c_1$ on $\gamma$, so that $L_{c_1}(\gamma)\leq e^{-n}l$. However we also have $h_n\leq e^{2(n+1)}c_1$ on $\CP^1$ and therefore, for each of the disks $\Omega$ bounded by $\gamma$, $A_{c_1}(\Omega)\geq e^{-2(n+1)}\pi/K_{max}$. For $l$ small enough, this contradicts the spherical isoperimetric inegality.
  
  \medskip

  {\em Second case: $\gamma\cap U_{u\leq n}\neq\emptyset$.}
  We first notice that there exists a constant $l_1>0$ (depending on the constant $C$ in Lemma \ref{lm:conformal-bounds}) such that, if $L_{h_n}(\gamma)\leq l_1$, then $L_h(\gamma)\leq 2l_1$ and $u\leq n+2$ on $\gamma$. Indeed, $\| du\|_h\leq C$, and since
  $$ h=e^{2u}c_1 = e^{2u-2\phi_n(u)}h_n \leq e^{2u-2n}h_n~, $$
  we have
  $$ \| du\|_{h_n} \leq e^{u-n}\| du\|_h \leq e^{u-n} C~. $$
  It follows from this equation, and from the fact that $u\leq n$ at a point of $\gamma$, that if $\gamma$ is short enough, then $e^{u-n}\leq 2$ on $\gamma$, and therefore $L_h(\gamma)\leq 2L_{h_n}(\gamma)\leq 2l$.  

  It then follows that $\gamma\subset U$, and therefore $\gamma$ is contained in a connected component $U_0$ of $U$. Since $U$ is a disjoint union of quasidisks, this connected component is topologically a disk, and therefore $\gamma$ bounds a topological disk $\Omega\subset U_0$. Since $h\geq h_n$ on $U_0$, $A_h(\Omega)\geq A_{h_n}(\Omega)\geq \pi/K_{max}$. So for $l$ small enough, we again have a contradiction with the isoperimetric inequality, this time for $(U_0,h)$.
\end{proof}

\subsection{A sequence of convex subsets}
\label{ssc:subsets}

We can now apply the Alexandrov Theorem to the sequence of metrics constructed in Section \ref{ssc:construction}, together with the injectivity radius bounds in Section \ref{ssc:bounds}, to obtain the following lemma.

\begin{lemma} \label{lm:sequence}
  Under the hypothesis of Theorem \ref{tm:main}, there exists a sequence of bounded convex domains $\Omega_n\subset \HH^3$, together with a sequence of diffeomorphisms $\phi_n:\CP^1\to \partial\Omega_n$, such that for all $n\in \N$:
  \begin{itemize}
  \item$\phi_n$ is a conformal map from $\CP^1$ to $\partial\Omega_n$, 
  \item $\phi_n^*I=h$ on $U_n$, where $U_n\subset U$, $U_n\subset U_m$ for $n\leq m$, and $\cup_{n\in \N}U_n=U$,
  \item $\phi_n^*I$ has constant curvature $e^{-2n}$ on $\CP^1\setminus U$, 
  \item the induced metric on $\partial\Omega_n$ has curvature in $[-1+\epsilon,K_{max}]$, 
  \item the injectivity radius of $(\partial \Omega_n, I)$ is bounded from below by a constant $l_0$,
  \item the induced metric on $\partial\Omega_n$ has bounded derivatives (in the sense of Definition \ref{df:bounded}).
  \end{itemize}
\end{lemma}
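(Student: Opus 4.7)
The strategy is to apply the Alexandrov theorem (Theorem \ref{tm:alexH}) to the family of metrics $(h_n)_{n\in\N}$ on $\CP^1$ from Definition \ref{df:hn}, then read off the conclusions from the curvature bounds of Lemma \ref{lm:Kmax} and the injectivity radius bound of Lemma \ref{lm:inj}. First I would verify that each $h_n$ is a smooth Riemannian metric on $\CP^1$ of curvature strictly greater than $-1$. Smoothness on $U$ is clear from the construction, and on $\CP^1\setminus U$ the metric is a rescaled spherical metric; the key point is smoothness across $\partial U$, which follows because Lemma \ref{lm:v} forces $u\to\infty$ as one approaches $\partial U$, and by construction $\phi_n(u)\equiv n+1$ once $u\geq n+2$, so $h_n$ coincides with $e^{2(n+1)}c_1$ in a neighborhood of $\partial U$ and extends smoothly across to $\CP^1\setminus U$. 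Lemma \ref{lm:conformal-bounds} supplies the bounds on $u$ and $\|du\|_h$ required by Lemma \ref{lm:Kmax} (exploiting that $U$ is a disjoint union of $k_0$-quasidisks), yielding curvature $K_n\in[-1+\epsilon, K_{max}]$, hence in particular $K_n>-1$.

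With the hypotheses verified, Theorem \ref{tm:alexH} produces, for each $n$, a convex subset $\Omega_n\subset\HH^3$ with smooth strictly convex boundary, together with an isometry $\psi_n:(\CP^1,h_n)\to(\partial\Omega_n,I)$. Compactness of $\partial\Omega_n$ (inherited from $\CP^1$) makes $\Omega_n$ bounded. Because $h_n$ is pointwise conformal to $c_1$, the conformal structure induced by $h_n$ on $\CP^1$ is the standard one, so the isometry $\psi_n$ is automatically a conformal diffeomorphism from $\CP^1$ (with its standard conformal structure) to $\partial\Omega_n$ (with the conformal structure of its induced metric). I would rename this map $\phi_n$ to match the statement.

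For the exhaustion, I would set $U_n=\{x\in U:u(x)\leq n\}$. The nesting $U_n\subset U_m$ for $n\leq m$ is immediate, and $\bigcup_n U_n=U$ since $u$ is everywhere finite on $U$. On $U_n$, property (1) of Lemma \ref{lm:phi} gives $\phi_n(u)=u$, so $h_n=h$ there and consequently $\phi_n^*I=h$ on $U_n$. On $\CP^1\setminus U$ the pullback metric equals $e^{2(n+1)}c_1$, which has constant curvature $e^{-2(n+1)}$; this matches the third bullet of the statement up to a harmless index shift which can be absorbed either by reindexing the sequence or by adjusting the constant in Lemma \ref{lm:phi}.

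The last two bullets are direct restatements of estimates already assembled: the global curvature bound $K_n\in[-1+\epsilon, K_{max}]$ is Lemma \ref{lm:Kmax}, and the uniform lower bound $l_0$ on the injectivity radius of $(\partial\Omega_n,I)\cong(\CP^1,h_n)$ is Lemma \ref{lm:inj}. Thus the proof is essentially a bookkeeping exercise combining previously established tools. The only substantive point to track is the one I expect to be the mildly delicate step, namely the smooth matching of the interpolation region $\{n\leq u\leq n+2\}$ with the rescaled spherical collar beyond it, which is precisely what the choice of the cutoff $\phi$ in Lemma \ref{lm:phi} was designed to ensure; without the quasidisk hypothesis feeding into Lemma \ref{lm:conformal-bounds}, neither the uniform upper bound $K_{max}$ nor the uniform injectivity radius $l_0$ would be available, so the entire construction would collapse.
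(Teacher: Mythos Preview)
Your proposal is correct and matches the paper's approach exactly; in fact the paper gives no formal proof of this lemma, merely stating it as the outcome of applying Theorem~\ref{tm:alexH} to the metrics $h_n$ of Definition~\ref{df:hn} together with Lemmas~\ref{lm:Kmax} and~\ref{lm:inj}, so your write-up is more detailed than what the paper itself provides. One small inaccuracy: Lemma~\ref{lm:conformal-bounds} only yields the gradient bound $\|du\|_h\leq C$, not the lower bound on $u$ that appears among the hypotheses of Lemma~\ref{lm:Kmax}; that lower bound comes instead from the chain $u=v+w+x$ with $v\geq 0$ (Lemma~\ref{lm:v}), $w\geq -C_w$ (Lemma~\ref{lm:w}), and $x$ bounded below via the Thurston-metric description---but this is a point the paper also leaves implicit.
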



    \section{Bounds on principal curvatures}
\label{sc:bounds}

\subsection{Principal curvature bounds for convex surfaces}
\label{ssc:principal}

The first point of this section is to find a uniform bound on the principal curvatures on the boundaries of the domains $\Omega_n$ constructed in Lemma \ref{lm:sequence}. 

\begin{lemma} \label{lm:p-curvature}
  Let $\epsilon>0$. There exists a constant $k_0>0$ such that if $\Omega\subset \HH^3$ is a convex subset such that the induced metric on $\partial\Omega$ has curvature $K\in [-1+\epsilon, 1/\epsilon]$, bounded derivatives, and injectivity radius bounded from below by $\epsilon$, then the principal curvatures of $\partial\Omega$ are in $[k_0,1/k_0]$.
\end{lemma}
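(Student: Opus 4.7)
The plan is to reduce the statement to a uniform upper bound on the maximum principal curvature $k_{\max}$, then establish this bound by combining a local graph representation of $\partial\Omega$ with a Pogorelov-type interior $C^2$ a priori estimate for Monge-Amp\`ere equations.

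\textbf{Reduction.} The Gauss equation for smooth convex surfaces in $\HH^3$ gives $\det B = k_1 k_2 = K+1$, which under our hypotheses lies in $[\epsilon, 1+1/\epsilon]$. Hence a bound $k_{\max} \leq M$ automatically yields $k_{\min} \geq \epsilon/M$, and conversely, so it suffices to bound $k_{\max}$ from above by a constant depending only on $\epsilon$.

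\textbf{Graph representation.} Fix $p \in \partial\Omega$ and let $P$ be the geodesic plane tangent to $\partial\Omega$ at $p$, which is a support plane of $\Omega$. The injectivity-radius lower bound $\epsilon$ of the induced metric ensures that the intrinsic ball $B^I_\epsilon(p) \subset \partial\Omega$ is an embedded topological disk whose extrinsic diameter is at most $\epsilon$, since intrinsic distance dominates extrinsic distance in $\HH^3$. The Gauss map of $\partial\Omega$ has differential $B$ with $\det B \geq \epsilon > 0$, so it is uniformly nondegenerate; together with convexity, the nearest-point projection from this intrinsic disk to $P$ should be a diffeomorphism onto a geodesic disk $D \subset P$ around $p$ of a definite radius $\delta = \delta(\epsilon) > 0$. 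Over $D$, $\partial\Omega$ is then the graph of a smooth, strictly convex function $f: D \to [0, \infty)$ in Fermi coordinates along the geodesic normal to $P$, with $f(0) = 0$, $Df(0) = 0$, and $f$ uniformly bounded on $D$.

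\textbf{Monge-Amp\`ere estimate.} In this graph representation the Gauss equation becomes a Monge-Amp\`ere type PDE
$$ \det(D^2 f) = F(x, f, Df), $$
where $F$ is smooth on $D \times \R \times T^*D$ and bounded above and away from zero, since $\det B = K+1 \in [\epsilon, 1+1/\epsilon]$ and the metric factors coming from the hyperbolic ambient are uniformly controlled on the fixed-size disk $D$. Because $f$ is strictly convex with $\det D^2 f$ between positive constants and $f$ is $C^0$-bounded on $D$, Pogorelov's classical interior $C^2$ a priori estimate (or a Caffarelli--Guti\'errez variant) gives $|D^2 f|(0) \leq C(\epsilon)$, which bounds the principal curvatures of $\partial\Omega$ at $p$ by a constant depending only on $\epsilon$, completing the proof modulo the reduction above.

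\textbf{Main obstacle.} The delicate step is the construction of the definite-size graph representation: one has to show that the combination of the intrinsic injectivity-radius bound, convexity of $\Omega$, and the nondegeneracy $\det B \geq \epsilon$ produces a disk $D$ whose radius $\delta(\epsilon)$ is controlled by $\epsilon$ alone, independent of $\Omega$ and of the a priori-unknown bound on $k_{\max}$ one is trying to prove. Once $D$ is set up, applying the interior Monge-Amp\`ere estimate is essentially standard; this is the ``well-understood argument needing adaptation'' alluded to in the outline of the paper.
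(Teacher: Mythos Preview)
Your reduction via the Gauss equation is correct, and the Pogorelov/Monge--Amp\`ere strategy you outline is a legitimate alternative route. However, the paper does \emph{not} argue this way, and your proposal contains a genuine gap precisely at the step you flag as the ``main obstacle''.

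\textbf{The gap.} You assert that the intrinsic $\epsilon$-ball around $p$ projects diffeomorphically onto a geodesic disk $D\subset P$ of radius $\delta(\epsilon)$ independent of $\Omega$, citing convexity and the nondegeneracy $\det B\geq\epsilon$. But a lower bound on $\det B$ gives no control on $\|B\|$: the pair $(k_1,k_2)=(n,\epsilon/n)$ has $\det B=\epsilon$ while $k_{\max}=n$. In that regime the surface bends sharply in one direction and the projection onto $P$ can collapse to a thin sliver whose inradius goes to zero; nothing in your argument rules this out. The Pogorelov interior estimate then has no definite domain to work on, and you are back to a circularity: the size of $D$ already encodes the $C^2$ bound you are after. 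Your proposal acknowledges this but does not resolve it, so as written the argument is incomplete. (Also, in Fermi coordinates over a totally geodesic plane in $\HH^3$ the right-hand side $F(x,f,Df)$ has genuine gradient dependence, so even with $D$ in hand you would need a version of Pogorelov that handles this structure together with a gradient bound.)

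\textbf{What the paper actually does.} The paper sidesteps the graph issue entirely by a compactness argument. It invokes a theorem of Labourie: a sequence of immersions with $C^\infty$-convergent induced metrics and \emph{uniformly bounded integral mean curvature} has a smoothly convergent subsequence. The curvature and injectivity-radius hypotheses give the metric convergence on intrinsic $\epsilon/2$-disks. The integral of $\tr(B)$ on such a disk is bounded by a purely extrinsic tube argument: the $1$-neighbourhood $\Omega^1$ of $\Omega$ is convex, the closest-point projection $\partial B(x,2)\setminus\Omega^1\to\partial\Omega^1$ is contracting and covers the equidistant over the disk, and the tube formula expresses the area of that equidistant as $\int(\cosh^2 1+\cosh 1\sinh 1\,\tr B+\sinh^2 1\,\det B)\,da_I$, each term nonnegative. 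This bounds $\int\tr B$ by the area of $\partial B(x,2)$, which is a fixed constant. Labourie's theorem then yields smooth subconvergence, contradicting any assumed blow-up of principal curvatures. No local graph, no Pogorelov estimate, and no control on the projection onto a support plane is needed.
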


The proof uses a result of Labourie \cite[Théorème D]{L1} which we recall here for the reader's convenience (restricting to the special case where the target space is $\HH^3$).

\begin{theorem}[Labourie] \label{tm:lab-compact}
  Let $f_n:S\to \HH^3$ be a sequence of immersions of a surface $S$ such that the pullback $f_n^*(h)$ of the hyperbolic metric $h$ converges smoothly to a metric $g_0$.
If the integral of the mean curvature is uniformly bounded, then a subsequence of $f_n$ converges smoothly to an isometric immersion
$f_\infty$ such that  $f_\infty^*(h)=g_0$.
\end{theorem}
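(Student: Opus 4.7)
The plan is to use the Gauss-Codazzi equations recorded in Section~\ref{ssc:III} to establish uniform local $C^k$-bounds on the shape operators $B_n$ of the immersions $f_n$, extract a convergent subsequence, and take the limit by integrating the structure equations of an immersion in $\HH^3$. Set $g_n = f_n^* h$ and let $K_n$ denote the intrinsic curvature of $g_n$; smooth convergence $g_n \to g_0$ gives uniform $C^{k,\alpha}_{\text{loc}}$-bounds on $g_n$ and $K_n$ on compact subsets of $S$.

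The Gauss equation yields a uniform bound on $\det(B_n)$ in terms of $K_n$ and the ambient curvature. For a symmetric $2\times 2$ operator one has the pointwise identity $|B|^2 = 4H^2 - 2\det(B)$, so controlling $|B_n|$ pointwise is equivalent, modulo the determinant bound, to controlling $|H_n|$ pointwise. In local $g_n$-isothermal coordinates $z = x+iy$, decomposing $B_n = H_n\,\mathrm{Id} + Q_n$ with $Q_n$ traceless, the Codazzi identity $(\nabla_x B_n)(y) = (\nabla_y B_n)(x)$ becomes a Cauchy-Riemann-type equation relating $\bar\partial\phi_n$ to $\partial H_n$ for the complex quadratic differential $\phi_n = Q_n(\partial_z,\partial_z)$, i.e.\ a first-order elliptic system for $B_n$ with source term governed by $H_n$ and its derivatives.

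The technical heart of the proof is to convert the $L^1$-bound on $H_n$, combined with the $L^\infty$-bound on $\det(B_n)$, into a uniform $L^\infty_{\text{loc}}$-bound on $B_n$. If $\|B_n\|_{L^\infty_{\text{loc}}}$ were unbounded, a concentration-rescaling argument centred at a blow-up point would flatten $(\HH^3, h)$ to Euclidean $\R^3$ and produce, in the limit, a nontrivial complete isometric immersion of a flat surface into $\R^3$ with $\det(B_\infty) \equiv 0$ (hence rank-one shape operator) and globally finite $L^1$ mean curvature. A classification of such developable blow-up profiles — using that a complete non-planar flat surface in $\R^3$ with rank-one shape operator is a cylinder, cone or tangent surface, and none of these has $\int |H_\infty|\,dA < \infty$ unless $B_\infty \equiv 0$ — rules this out and yields the required $L^\infty_{\text{loc}}$-bound on $B_n$.

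Once $B_n$ is uniformly bounded, Schauder bootstrapping on the Gauss-Codazzi system, using the smooth convergence of $g_n$, produces $C^{k,\alpha}_{\text{loc}}$-bounds on $B_n$ for every $k$; a diagonal subsequence then satisfies $B_n \to B_\infty$ in $C^k_{\text{loc}}$, with $B_\infty$ satisfying Gauss-Codazzi with respect to $g_0$. Post-composing each $f_n$ with an isometry of $\HH^3$ so that $f_n(x_0)$ and a distinguished orthonormal frame at some reference point $x_0 \in S$ are kept fixed independently of $n$, and integrating the moving-frame ODE determined by $g_n$ and $B_n$, yields $C^{k+1}_{\text{loc}}$-convergence $f_n \to f_\infty$ to an isometric immersion $f_\infty : S \to \HH^3$ with $f_\infty^* h = g_0$. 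The main obstacle is the blow-up step: the mean-curvature integral bound is strictly weaker than a pointwise one, and the argument must crucially use the determinant constraint to rigidify the blow-up profile and exclude nontrivial concentration.
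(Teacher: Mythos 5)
The paper does not prove this statement: it is quoted verbatim from Labourie \cite{L1} (Th\'eor\`eme~D), whose proof proceeds by lifting the immersions to their Gauss lifts in a Grassmannian bundle over $\HH^3$, observing that for immersions satisfying a uniformly elliptic curvature relation (in particular positive extrinsic curvature, which is the setting of \cite{L1} and the only setting in which the theorem is used here) these lifts are pseudo-holomorphic curves whose area is controlled by $\int(1+|H|)\,da$, and then invoking Gromov compactness together with the non-existence of bubble spheres. Your proposal replaces this by an elementary Gauss--Codazzi blow-up argument, and that replacement has a genuine gap at exactly the step you identify as the ``technical heart''. The $L^1$ norm of the mean curvature is not scale-invariant on a surface: if you rescale by $\lambda_n=\|B_n(x_n)\|\to\infty$, then $H'_n=H_n/\lambda_n$ while $da'=\lambda_n^2\,da$, so $\int_{B(x_n,r/\lambda_n)}|H'_n|\,da'=\lambda_n\int_{B(x_n,r/\lambda_n)}|H_n|\,da$, and the uniform bound on $\int|H_n|\,da$ gives no bound at all on the mean-curvature integral of the blow-up limit. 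Your contradiction (``no nontrivial developable profile has finite $\int|H_\infty|$'') therefore never gets off the ground: the profile is perfectly allowed to have infinite total mean curvature. Concretely, take cylinders over plane curves $\gamma_n(s)=n^{-1}\gamma(ns)$ with $\gamma$ a fixed complete curve with $\kappa\in L^1$, $\kappa\not\equiv 0$: the induced metrics are identically flat, $\int|H_n|\,da$ is uniformly bounded, $\|B_n\|_\infty\sim n\to\infty$, and the blow-up profile is exactly the complete cylinder over $\gamma$ --- a nontrivial rank-one example that your classification step is supposed to exclude but cannot.

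This example also shows that the statement, as transcribed without Labourie's ellipticity hypothesis, is simply false (the limit surface is creased, so no subsequence converges smoothly to an immersion), so no correct proof of the literal statement exists; any repair must use $\det B_n=K_n+1\ge\epsilon>0$, which is available in the paper's application (Lemma~\ref{lm:p-curvature}) but which your argument never invokes before the final bootstrap. Even granting that hypothesis, the rescaled determinants $\det B'_n=\det B_n/\lambda_n^2\to 0$, so positivity of the extrinsic curvature does not by itself rigidify your blow-up profile either; the mechanism that actually excludes concentration in Labourie's proof is the bubbling analysis for the pseudo-holomorphic Gauss lifts (a non-constant $J$-holomorphic sphere would have to appear, and none exists), which has no counterpart in your argument. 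A secondary issue: your closing Schauder bootstrap treats Codazzi as an elliptic system for $B_n$, but Codazzi alone is two equations for three unknowns; the regularity comes from coupling with the Gauss equation as a Monge--Amp\`ere equation, which is uniformly elliptic only where $\det B$ is bounded below --- again the ellipticity you omitted.
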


Note that Theorem \ref{tm:lab-compact} is local: no assumption of compactness of $S$ or completeness of $f_n^*(h)$ is needed.

\begin{proof}[Proof of Lemma \ref{lm:p-curvature}]
  We proceed by contradiction and assume that there exists $\epsilon>0$ such that for all $n$, there exists a convex subset $\Omega_n$ such that the induced metric on $\partial\Omega_n$ has curvature $K\in [-1+\epsilon,1/\epsilon]$ and injectivity radius bounded from below by $\epsilon$, while the principal curvatures of $\partial\Omega_n$ at a point $x_n$ are not in $[1/n,n]$.
  
  We apply Theorem \ref{tm:lab-compact} to a sequence of parameterizations $\rho_n:\DD\to \partial \Omega_n$ of the disk of center $x_n$ and radius $\epsilon/2$, which are indeed topological disks by the asumption on the injectivity radius. Since the induced metrics on the $\rho_n(\DD)$ have curvature $K\in [-1+\epsilon, 1/\epsilon]$, there exists a subsequence such that the induced metrics converge to a limit $g_0$. Since the $\Omega_n$ have bounded curvature, we can choose the $\rho_n$ so that this convergence is $C^\infty$. Therefore, by Theorem \ref{tm:lab-compact}, after extracting another subsequence, there exists a sequence of isometries $(\psi_n)_{n\in \N}$ of $\HH^3$ such that
  \begin{enumerate}
  \item either the integral mean curvature on $\rho_n(\DD)$ tends to infinity,
  \item or the $\psi_n\circ\rho_n:\DD\to \HH^3$ converge smoothly to a smooth limit $\rho_\infty:\DD\to \HH^3$. 
  \end{enumerate}
  Clearly the second hypothesis is excluded by the hypothesis on the principal curvatures of $\partial\Omega_n$ at $x_n$, so it is sufficient to exclude also the first case.

  Let $\Omega_n^1$ be the $1$-neighborhood of $\Omega_n$:
  $$ \Omega_n^1 = \{ x\in \HH^3 ~|~ d(x,\Omega_n)\leq 1 \}~, $$ 
  and let $\rho:\partial\Omega_n^1\to \partial\Omega_n$ be the closest-point projection. Since $\Omega_n$ is convex and smooth, $\Omega_n^1$ is also convex, and $\rho$ is a diffeomorphism. A well-known ``tube'' formula indicates that $\rho$ contracts the area of $\partial\Omega_n^1$ at a point $x\in \partial\Omega_n^1$ by a factor $\cosh(1)^2 + \cosh(1)\sinh(1)\tr(B) + \sinh(1)^2\det(B)$, where $B$ denotes the shape operator of $\Omega_n$ at $\rho(x)$. As a consequence, the area of $\rho^{-1}(\rho_n(\DD))$ is equal to
  $$ A(\rho^{-1}(\rho_n(\DD))) = \int_{\rho_n(\DD)} \cosh(t)^2 + \cosh(t)\sinh(t)\tr(B) + \sinh(t)^2\det(B) da_I~. $$
  Let $B(x_n,2)$ be the ball of center $x_n$ and radius $2$. The projection from
  $$ (\partial B(x_n,2))\cap (\HH^3\setminus \Omega^1_n) $$
  to $\partial \Omega^1_n$ is contracting, and its image contains $\rho_n(\DD)$. As a consequence, the area of $\rho^{-1}\circ\rho_n(\DD)$ is at most equal to the area of $\partial B(x_n,2)$, so that
  $$ \int_{\rho_n(\DD)}\cosh(1)^2 + \cosh(1)\sinh(1)\tr(B) + \sinh(1)^2\det(B) da_I \leq 4\pi \sinh(2)^2~. $$
  Since all three terms are non-negative, this simplies in particular that the integral of the mean curvature is bounded on $\rho_n(\DD)$, and the first alternative above is impossible. This concludes the proof of the lemma.
\end{proof}

\subsection{Curvature bounds in the almost-flat regions}

In the proof of Theorem \ref{tm:main} it will be necessary to show that the conformal structure on the boundary at infinity of the convex set $\Omega_\infty$ which is the limit of the $\Omega_n$ is identified -- under the limit map from $\CP^1$ to $\overline{\HH^3}$ -- with the conformal structure at infinity of $\CP^1$. This will be based on the fact that the regions in the $\partial\Omega_n$ which are the image of the complement of $U$ are (asymptotically) umbilic, with principal curvatures going to $1$ as $n\to\infty$. This section is focused on proving this result.

Specifically, we will need the following lemma.

\begin{lemma} \label{lm:flat-regions}
  For all $k_0>0$, there exists $R>0$ and $K_0>0$ such that if $\Omega\subset \HH^3$ is a convex domain and $x\in \partial \Omega$ such that the injectivity radius of $(\partial \Omega,I)$ at $x$ is at least $R$, with curvature in $[-K_0,K_0]$ on the ball of center $x$ and radius $R$, then the principal curvatures of $\partial\Omega$ at $x$ are in $[1-k_0,1+k_0]$.
\end{lemma}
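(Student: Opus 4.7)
The plan is to argue by contradiction: suppose the lemma fails, so there exist $k_0 > 0$ and sequences $(\Omega_n, x_n, R_n)_{n \in \N}$ with $R_n \to \infty$, $\Omega_n \subset \HH^3$ convex, $x_n \in \partial \Omega_n$, such that the induced metric on $\partial \Omega_n$ has curvature in $[-1/n, 1/n]$ and injectivity radius at least $R_n$ on the intrinsic ball $B(x_n, R_n)$, while the principal curvatures of $\partial \Omega_n$ at $x_n$ are not both in $[1-k_0, 1+k_0]$.

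The first step is to apply Lemma \ref{lm:p-curvature} with $\epsilon=1/2$ on a slightly smaller intrinsic ball. The proof of that lemma is essentially local at the chosen point, using the convexity of $\Omega$ only in the tube argument that bounds the area contribution globally; so it applies here and yields a uniform bound $[1/k_1, k_1]$ on the principal curvatures of $\partial \Omega_n$ on $B(x_n, R_n/2)$ for $n$ large, where $k_1$ depends only on $\epsilon$. I then normalize by isometries $\psi_n$ of $\HH^3$ so that $\psi_n(x_n)$ is a fixed point $x_0$ with fixed oriented unit normal, and parametrize $\psi_n(B(x_n, R_n/2))$ by conformal maps $f_n$ from Euclidean disks whose radii tend to infinity into $\HH^3$. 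The curvature bound ensures (after passing to a subsequence) that the pulled-back induced metrics $f_n^* I$ converge smoothly on compact subsets of $\C$ to the flat metric $g_{\mathrm{flat}}$, while the bound on principal curvatures gives a uniform bound on the mean curvature integrals. Theorem \ref{tm:lab-compact} then yields a smooth subsequential limit $f_\infty \colon (\C, g_{\mathrm{flat}}) \to \HH^3$ which is an isometric immersion.

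The main obstacle is to identify the image $\Sigma_\infty = f_\infty(\C)$ as a horosphere. It is complete, locally convex, and flat, so by the Gauss equation the shape operator satisfies $\det B = 1$, and the principal curvatures lie in $[1/k_1, k_1]$. The classification of complete flat convex immersed surfaces in $\HH^3$ (going back to Volkov--Vladimirova, and also obtainable directly by analyzing the Codazzi equation on a flat plane under the constraint $\det B = 1$) reduces the possibilities to horospheres and universal covers of equidistant tubes around a geodesic. A universal cover of an equidistant tube at distance $d > 0$ has principal curvatures $(\coth d, \tanh d)$ and the underlying tube in $\HH^3$ carries a closed geodesic of length $2\pi \sinh d$; were $f_\infty$ of this type, then for $n$ large enough $\partial \Omega_n$ would contain a geodesic loop based close to $x_n$ of length close to $2\pi \sinh d$, lying within the intrinsic ball of radius $R_n$, contradicting the assumption that the injectivity radius at $x_n$ is at least $R_n$. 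So $\Sigma_\infty$ is a horosphere, and its principal curvatures are $(1,1)$ at every point.

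By the smooth convergence $f_n \to f_\infty$ on compact subsets, the principal curvatures of $\partial \Omega_n$ at $x_n$ converge to those of $\Sigma_\infty$ at $x_0$, namely $(1,1)$. This contradicts the assumption that they lie outside $[1-k_0, 1+k_0]$, and concludes the proof.
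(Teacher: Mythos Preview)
Your proof follows the same overall strategy as the paper's---argue by contradiction, extract a complete flat limit surface, invoke Volkov--Vladimirova, and derive a contradiction---but the compactness mechanism differs. The paper normalizes the $\Omega_n$ and passes directly to a Hausdorff limit of convex sets; the limit $\Omega_\infty$ is convex with $x_0 \in \partial\Omega_\infty$, and since induced metrics converge under Hausdorff convergence of convex bodies, $\partial\Omega_\infty$ is intrinsically flat with infinite injectivity radius, hence a horosphere by Proposition~\ref{pr:volkov}. You instead first invoke Lemma~\ref{lm:p-curvature} to bound the principal curvatures, then apply Theorem~\ref{tm:lab-compact} to get smooth convergence of parametrizations $f_n \to f_\infty$. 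This buys you something: the final step---convergence of the principal curvatures at $x_n$ to $(1,1)$---is immediate from $C^\infty$ convergence, whereas the paper's Hausdorff argument leaves that passage implicit.

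The one place where your argument is looser than the paper's is the exclusion of the tube case. You assert that if $f_\infty$ covered an equidistant tube then $\partial\Omega_n$ would carry a short geodesic loop near $x_n$, but this does not follow directly from smooth convergence of the $f_n$: the $f_n$ are embeddings of disks, and the short closed geodesic lives on the \emph{image} tube in $\HH^3$, not a priori on $\partial\Omega_n$ with its induced metric. The paper sidesteps this because Hausdorff convergence yields an \emph{embedded} limit surface $\partial\Omega_\infty$; if that were a tube it would itself carry closed geodesics of length $2\pi\sinh d$, contradicting the infinite injectivity radius of the limit metric. In your framework the cleanest fix is to observe that the normalized convex sets $\Omega_n$ also converge Hausdorff on compacts to some convex $\Omega_\infty$ with $f_\infty(\C)\subset\partial\Omega_\infty$; if $f_\infty$ multiply covered a tube then $\partial\Omega_\infty$ would be that tube, and the injectivity-radius contradiction follows as in the paper. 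Alternatively, the hyperbolic Gauss map of each $\partial\Omega_n$ is globally injective by strict convexity, and under your uniform principal curvature bounds the maps $G_n\circ f_n$ are uniformly bilipschitz embeddings of growing disks into $\partial_\infty\HH^3$, which cannot converge to the non-injective Gauss map of a tube cover.
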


The proof uses the following proposition.

\begin{proposition} \label{pr:volkov}
Let $\Omega\subset \HH^3$. Assume that $(\partial \Omega, I)$ is flat, and that its injectivity radius at a point $x\in  \partial \Omega$ is infinite. Then $\Omega$ is a horosphere. 
\end{proposition}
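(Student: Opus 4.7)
First I would set up global flat orthonormal coordinates on $\partial\Omega$, with the overall plan of reducing the second-fundamental-form equations to a real Monge-Amp\`ere equation on the plane, applying J\"orgens' theorem to force $\II$ constant, and then using topology to exclude the non-horospheric case. Since $(\partial\Omega, I)$ is flat with infinite injectivity radius at $x$, the exponential map $\exp_x : T_x(\partial\Omega)\to \partial\Omega$ is a global diffeomorphism and an isometry from $(\R^2, du^2+dv^2)$ onto $(\partial\Omega, I)$. In these flat orthonormal coordinates the Christoffel symbols of $I$ vanish, so writing $\II = e\, du^2 + 2f\, du\, dv + g\, dv^2$ the Codazzi equations of Section \ref{ssc:III} reduce to $e_v = f_u$ and $f_v = g_u$; this integrability condition produces a smooth function $\phi : \R^2 \to \R$ with $\II = \Hess \phi$. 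Since $K_I = 0$, the Gauss equation gives $\det B = 1$, so $\phi$ satisfies $\det \Hess \phi = 1$ on all of $\R^2$, and the convexity of $\Omega$ forces $\phi$ convex.

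Then I would invoke J\"orgens' theorem: any convex $C^2$ entire solution of $\det \Hess \phi = 1$ on $\R^2$ is a quadratic polynomial, so $\II$ is a constant symmetric tensor. After a rotation of $(u,v)$ one obtains $\II = \lambda\, du^2 + \lambda^{-1}\, dv^2$ for some constant $\lambda > 0$, so the principal curvatures of $\partial\Omega$ are the constants $\lambda$ and $\lambda^{-1}$.

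To conclude, I would force $\lambda = 1$. If $\lambda = 1$ then $\partial\Omega$ is umbilic with principal curvature $1$, and among the umbilic surfaces of $\HH^3$ only the horosphere has $K_I = 0$, so $\partial\Omega$ is a horosphere. If $\lambda \neq 1$, then integrating the Gauss--Weingarten system in the $(u,v)$ frame --- using the vanishing of the off-diagonal components of $\II$ together with $\nabla^{\HH^3}_{\partial_u}\partial_u = \lambda\,\nu$ and $\nabla^{\HH^3}_{\partial_u}\nu = -\lambda\,\partial_u$ --- shows that each $u$-line is a curve of constant geodesic curvature $\lambda$ lying in a totally geodesic plane orthogonal to the parallel field $\partial_v$, and that all these planes meet a common geodesic axis $\gamma \subset \HH^3$ orthogonally. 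Hence $\partial\Omega$ would be a constant-distance tube around $\gamma$, diffeomorphic to $\R \times S^1$, contradicting $\partial\Omega \cong \R^2$ from the first step.

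The hard part will be this last step, specifically verifying that a non-umbilic complete flat surface in $\HH^3$ with constant principal curvatures is necessarily a tube around a geodesic; this can be done either by the moving-frame computation just sketched or by invoking the classical classification of isoparametric surfaces in $\HH^3$. The J\"orgens step is the technical heart of the argument but is a classical result, and everything preceding the Monge-Amp\`ere equation reduces to a routine application of Gauss--Codazzi once the global flat coordinates from the injectivity-radius hypothesis are set up.
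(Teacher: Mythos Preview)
Your argument is correct. The paper's proof is a one-line citation of the theorem of Volkov and Vladimirova, which classifies isometric immersions of the Euclidean plane into $\HH^3$ as either horospheres or covers of tubes about geodesics; the infinite-injectivity-radius hypothesis then excludes the tube. You instead reprove this classification from scratch: the global flat chart reduces Codazzi to $\II=\Hess\,\phi$ and Gauss to the Monge--Amp\`ere equation $\det\Hess\,\phi=1$, J\"orgens' theorem forces $\II$ constant, and then the isoparametric dichotomy (umbilic gives a horosphere, non-umbilic gives a tube) finishes. Your route trades dependence on the Volkov--Vladimirova reference for dependence on J\"orgens, which is arguably better known and makes the argument essentially self-contained; the paper's route buys brevity. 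The last step---identifying the non-umbilic constant-principal-curvature surface with a tube about a geodesic---is indeed the one place your sketch is terse, but either the moving-frame computation you outline or the classical isoparametric classification in $\HH^3$ closes it cleanly.
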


\begin{proof}
  This follows directly from a result of Volkov and Vladimirova \cite{volkov-vladimirova} who proved that an isometric immersion of the Euclidean plane in $\HH^3$ is either a parameterization of a horosphere, or a multiple cover of a cylinder. 
\end{proof}

\begin{proof}[Proof of Lemma \ref{lm:flat-regions}]
  Assume by contradiction that there exists $k_0>0$ such that no such $R$ and $K_0$ exist. Then for all $n\in \N_{>0}$ there exists a convex subset $\Omega_n$ and a point $x_n\in \partial \Omega_n$ such that the injectivity radius of $\partial \Omega_n$ at $x_n$ is at least $n$, and the curvature of $\partial\Omega_n$ on the ball of center $x_n$ and radius $n$ is in $[-1/n, 1/n]$. However one of the principal curvatures of $\partial\Omega_n$ at $x_n$ is not in $[1-k_0,1+k_0]$.

  For each $n$, let $\phi_n\in \Isom(\HH^3)$ be an isometry sending $x_n$ to $x_0$ (a fixed point). We can extract from $(\phi_n(\Omega_n))_{n\in \N}$ a sequence of convex subset $\Omega'_n\subset \HH^3$ which converges in the Hausdorff topology on compact subsets of $\HH^3$ to a limit $\Omega_\infty$ (see \cite{nadler-quinn-stavrakas} and references therein for topological properties of the space of convex subsets of a linear space).

  Then $x_0\in \partial\Omega_\infty$ by construction. Moreover $\partial\Omega'_n\to \partial\Omega_\infty$, and the induced metric on $\partial\Omega'_n$ converges to the induced metric on $\partial\Omega_\infty$ (see \cite[Section 10.2]{burago-burago-ivanov}). As a consequence, $\partial\Omega_\infty$ is intrisically flat, with infinite injectivity radius since the injectivity radius of $\partial\Omega_n$ at $x_n$ is at least $n$. It follows from Proposition \ref{pr:volkov} that it is a horosphere. This contradicts the hypothesis that the principal curvatures of $\partial\Omega'_n$ are not in  $[1-k_0,1+k_0]$ at $x_0$.
\end{proof}


    \section{Proofs of the main results}
\label{sc:proofs}

We are now ready to prove Theorem \ref{tm:main}.

\subsection{Convergence of the Gauss maps}

We consider the sequence of convex domains $\Omega_n$ and conformal diffeomorphisms $\phi_n:\CP^1\to \partial\Omega_n$ described in Lemma \ref{lm:sequence}.

For each $n$, we denote by $G_n:\partial \Omega_n\to \partial_\infty \HH^3$ the hyperbolic Gauss map of $\partial\Omega_n$, that is, the map sending $x\in \partial\Omega_n$ to the endpoint at infinity of the geodesic ray starting from $x$ directed by the exterior normal of $\Omega$ at $x$.

\begin{lemma} \label{lm:gauss}
  There exists a constant $K_G>1$ such that for all $n\in \N$, the map $G_n\circ \phi_n:\CP^1\to \partial_\infty\HH^3$ is $K_G$-quasi-conformal. 
\end{lemma}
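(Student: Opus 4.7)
The plan is to bound the quasi-conformal dilation of $G_n$ in terms of the principal curvatures of $\partial\Omega_n$, apply Lemma~\ref{lm:p-curvature} to obtain a uniform bound on those principal curvatures, and use that $\phi_n$ is conformal by the first bullet of Lemma~\ref{lm:sequence}.

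First, I would verify that Lemma~\ref{lm:p-curvature} applies to the $\Omega_n$: by Lemma~\ref{lm:sequence}, the induced metric on $\partial\Omega_n$ has curvature in $[-1+\epsilon,K_{max}]$ and injectivity radius bounded below by $l_0$, both uniformly in $n$. Hence there is a constant $k_0>0$, independent of $n$, such that the principal curvatures $k_1^{(n)},k_2^{(n)}$ of $\partial\Omega_n$ lie in $[k_0,1/k_0]$ at every point.

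Next, I would identify the pullback conformal class of $G_n$ using standard horospherical asymptotics. Consider the outward equidistant surface $\partial\Omega_n^t$ at hyperbolic distance $t$; pulling back its induced metric to $\partial\Omega_n$ via the normal exponential map yields
$$ I_t=I\bigl((\cosh t\cdot\mathrm{Id}+\sinh t\cdot B)\cdot,(\cosh t\cdot\mathrm{Id}+\sinh t\cdot B)\cdot\bigr)=\cosh^2t\cdot I+2\cosh t\sinh t\cdot\II+\sinh^2t\cdot\III~, $$
where $B$ is the shape operator of $\partial\Omega_n$. Since $\partial\Omega_n$ is strictly convex with principal curvatures bounded below by $k_0>0$, the operator $\mathrm{Id}+B$ is uniformly invertible, and the renormalized limit $4e^{-2t}I_t$ converges as $t\to\infty$ to
$$ I_\infty:=I+2\II+\III=I\bigl((\mathrm{Id}+B)\cdot,(\mathrm{Id}+B)\cdot\bigr)~. $$
A classical computation (carried out for instance in the upper half-space model, or via the Epstein horospherical picture) shows that $I_\infty$ is a representative of the pullback conformal class $G_n^*c$, where $c$ denotes the canonical conformal structure on $\partial_\infty\HH^3$. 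The infinitesimal dilation of $G_n\colon(\partial\Omega_n,I)\to(\partial_\infty\HH^3,c)$ at a point is then the ratio of the eigenvalues of $\mathrm{Id}+B$, namely $(1+k_{\max}^{(n)})/(1+k_{\min}^{(n)})$, which lies in $[(1+k_0)/(1+1/k_0),(1+1/k_0)/(1+k_0)]$ by the principal curvature bounds. This gives a uniform quasi-conformal constant $K$ for $G_n$ depending only on $k_0$.

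Finally, since $\phi_n\colon\CP^1\to(\partial\Omega_n,I)$ is conformal by Lemma~\ref{lm:sequence}, the composition $G_n\circ\phi_n$ inherits the same quasi-conformal dilation, and we may set $K_G=K$. The main technical point is the asymptotic identification of the renormalized equidistant metric with a representative of the pullback conformal class $G_n^*c$; the underlying formulas are classical in hyperbolic geometry but require some care with basepoint choices for the visual metrics on $\partial_\infty\HH^3$ and with the normalization constants appearing in the $t\to\infty$ limit.
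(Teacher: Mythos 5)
Your proof is correct and follows essentially the same route as the paper: the differential of $G_n$ is identified (up to conformal factor) with $\mathrm{Id}+B$, the uniform bounds $k_1,k_2\in[k_0,1/k_0]$ from Lemma~\ref{lm:p-curvature} (applicable thanks to the curvature and injectivity radius bounds of Lemma~\ref{lm:sequence}) control the eigenvalue ratio of $\mathrm{Id}+B$, and conformality of $\phi_n$ finishes the argument. Your derivation via the renormalized equidistant metrics $4e^{-2t}I_t\to I+2\II+\III$ is a slightly more explicit justification of the identification $dG_n\sim\mathrm{Id}+B$ than the paper's appeal to the exponential map, but it is the same argument in substance.
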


\begin{proof}
  Let $x\in \partial\Omega_n$, for some $n\in \N$. We can identify $T_x\partial \Omega_n$ to $T_{G_n(x)}\partial_\infty\HH^3$ by the exponential map at $x$, $\exp_x:T_x\HH^3\to \partial_\infty\HH^3$, using the natural identification of $T_x\partial\Omega_n$ with the normal in $T_x\HH^3$ of the unit normal to $\partial\Omega_n$ at $x$. Under this identification, the differential of $G_n$ at $x$ is equal to $Id+B$, where $Id$ is the identity, and $B$ the shape operator of $\partial\Omega_n$.

  Since the principal curvatures of $\partial\Omega_n$ are uniformly bounded, the differentials of the Gauss map $G_n$ are uniformly bounded. Since the $\phi_n$ are conformal, it follows that the maps $G_n\circ\phi_n$ are uniformly quasi-conformal. 
\end{proof}

\begin{corollary} \label{cr:compact}
  After extracting a subsequence and composing with a sequence of M\"obius transformations $u_n$, the sequence $(G_n\circ \phi_n)_n$  converges $C^0$ to a $K_G$-quasi-conformal homeomorphism $\psi_\infty:\CP^1\to \partial_\infty \HH^3$. 
\end{corollary}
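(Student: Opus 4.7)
The plan is to apply the classical compactness theorem for normal families of $K$-quasi-conformal mappings of $\CP^1$, using Lemma~\ref{lm:gauss} as the uniform quasi-conformality input and a three-point normalization by Möbius transformations of $\partial_\infty\HH^3$ to prevent degeneration to a constant map.

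First, I would fix once and for all three distinct points $\xi_1,\xi_2,\xi_3\in \CP^1$ and three distinct target points $\eta_1,\eta_2,\eta_3\in \partial_\infty \HH^3$. For each $n$, since $G_n\circ \phi_n$ is in particular a homeomorphism of $\CP^1$, there is a unique Möbius transformation $u_n$ of $\partial_\infty \HH^3$ satisfying $u_n\bigl(G_n\circ \phi_n(\xi_i)\bigr)=\eta_i$ for $i=1,2,3$. The normalized maps $f_n:=u_n\circ G_n\circ \phi_n$ remain $K_G$-quasi-conformal (since Möbius transformations are conformal, hence $1$-quasi-conformal) and by construction satisfy $f_n(\xi_i)=\eta_i$.

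Next, I would invoke the classical normal-family statement for quasi-conformal maps (see e.g.~\cite{ahlfors}, or equivalently the references used in the proof of Lemma~\ref{lm:v}): any sequence of $K$-quasi-conformal self-homeomorphisms of $\CP^1$ sending three fixed points to three fixed points is equicontinuous with respect to the spherical metric, and therefore admits a subsequence converging uniformly on $\CP^1$ to a continuous map $\psi_\infty:\CP^1\to \partial_\infty \HH^3$. The same body of results guarantees that such a uniform limit is either constant or itself a $K_G$-quasi-conformal homeomorphism; the normalization $\psi_\infty(\xi_i)=\eta_i$ for three distinct points excludes the constant case, yielding the stated conclusion.

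No serious obstacle is expected: the statement is essentially a textbook application of quasi-conformal compactness. The only point to be watchful about is a bookkeeping one, namely that the Möbius transformations $u_n$ produced here should be interpreted as the boundary values of the isometries of $\HH^3$ that will be used, later in Section~\ref{sc:proofs}, to renormalize the convex bodies $\Omega_n$ themselves; the three reference pairs $(\xi_i,\eta_i)$ should accordingly be chosen so as to be compatible with the geometric normalization needed to extract convergent subsequences of $\Omega_n$ on each connected component of $U$ in the remainder of the proof of Theorem~\ref{tm:main}.
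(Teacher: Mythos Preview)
Your proposal is correct and is essentially the same argument as the paper's, which simply invokes the compactness of the space of $K$-quasi-conformal homeomorphisms of $\CP^1$ (citing \cite{ahlfors}); you have just spelled out the standard three-point normalization that makes that compactness statement applicable. Your closing remark about interpreting the $u_n$ as boundary values of hyperbolic isometries is exactly what the paper observes immediately after the proof.
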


\begin{proof}
  This follows from the compactness of the space of $K$-quasi-conformal homeomorphisms from $\CP^1$ to $\CP^1$, see e.g. \cite{ahlfors}.
\end{proof}

Note that the M\"obius transformations appearing in this corollary are boundary values of hyperbolic isometries. If $u\in PSL(2,\C)$ is a hyperbolic isometry, and $\partial u$ is its boundary action, considered as a M\"obius transformation, then composing $G_n\circ \phi_n$ on the left by a M\"obius transformation $\partial u$ is equivalent to composing $\phi_n$ on the left by $u$ and then composing with its Gauss map.

We will assume in the last section below that the $\phi_n$ have already been composed on the left by a sequence of M\"obius transformations, so that $G_n\circ \phi_n\to \psi_\infty$. 

\subsection{Convergence of boundary data}

Recall that we denote by $\rho: \HH^3\to B(0,1)$ the Poincar\'e model of hyperbolic space. We consider the sequence of maps $(\phi_n)_{n\in \N}$ considered above.

\begin{lemma} \label{lm:final}
  After extracting a subsequence,
  \begin{enumerate}
  \item the sequence of restrictions of the $\phi_n$ to $U$ converge $C^\infty$ to an isometric embedding $\phi_\infty$ of $(U,h)$ in $\HH^3$,
  \item the sequence $(\rho\circ \phi_n)_{n\in \N}$ converges to a map $\Phi_\infty:\CP^1\to B(0,1)$ which is conformal on $U$ and on the interior of $\CP^1\setminus U$,
  \item $\phi_\infty(U)$ is the boundary in $\HH^3$ of a convex subset $\Omega$.
  \end{enumerate}
\end{lemma}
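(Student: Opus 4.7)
The strategy is to apply Labourie's compactness theorem on a compact exhaustion of $U$ to obtain the isometric embedding $\phi_\infty$, to use Lemma \ref{lm:flat-regions} to show that the surfaces $\phi_n(\CP^1\setminus U)$ retreat to the ideal boundary of $\HH^3$ as $n\to\infty$, and finally to identify the geometric limit of the $\Omega_n$ as a convex set whose metric boundary is $\phi_\infty(U)$.

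For part (1), I would exhaust $U$ by an increasing sequence of relatively compact open sets $V_j\Subset U$. On each $V_j$ the restriction of $\phi_n$ is, for $n$ large enough so that $V_j\subset U_n$, an isometric immersion $(V_j,h)\hookrightarrow \HH^3$. Lemma \ref{lm:p-curvature} gives a uniform bound on the principal curvatures of $\partial\Omega_n$, hence a uniform bound on the integral of the mean curvature over $\phi_n(V_j)$. After the normalization from Corollary \ref{cr:compact} has been carried out on the ambient side, Theorem \ref{tm:lab-compact} applied on each $V_j$ yields, by a diagonal extraction, a $C^\infty$-limit $\phi_\infty:(U,h)\to \HH^3$ which is an isometric immersion. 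Injectivity of $\phi_\infty$ follows from the fact that each $\phi_n$ is an embedding into the boundary of a convex body, together with the convex-limit property established in step (3).

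For parts (2) and (3), on $U$ I set $\Phi_\infty=\rho\circ \phi_\infty$, which is conformal since in the Poincar\'e model the induced metric on a smooth surface is a positive conformal rescaling of the restricted Euclidean metric. On the interior of $\CP^1\setminus U$, the metric $h_n$ equals $e^{2(n+1)}c_1$ by Definition \ref{df:hn}, so the $h_n$-distance from any fixed compact subset $K$ of the interior of $\CP^1\setminus U$ to $U$ tends to infinity. By Lemma \ref{lm:flat-regions} the principal curvatures of $\partial\Omega_n$ on $\phi_n(K)$ converge uniformly to $1$, so $\phi_n(K)$ becomes asymptotically horospherical and accumulates on $\partial_\infty\HH^3$. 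Combined with the Gauss-map convergence $G_n\circ \phi_n\to\psi_\infty$ from Corollary \ref{cr:compact}, this forces $\rho\circ\phi_n|_K\to \psi_\infty|_K$. The restriction of $\psi_\infty$ to the interior of $\CP^1\setminus U$ is conformal because the differential of $G_n$ equals $\mathrm{Id}+B_n$ and $B_n\to \mathrm{Id}$ on $\phi_n(K)$, so $G_n\circ \phi_n$ is asymptotically conformal there. For (3), after extracting a further subsequence so that $(\Omega_n)$ converges in the Hausdorff topology on compact subsets of $\HH^3$ to a closed convex set $\Omega$, one identifies $\partial\Omega$ with $\phi_\infty(U)$: the inclusion $\phi_\infty(U)\subset \partial\Omega$ comes from the smooth convergence of step (1), and the reverse inclusion uses that any finite boundary point of $\Omega$ is a limit of points on $\partial\Omega_n$ that cannot lie in the asymptotically horospherical regions, since these regions escape to the ideal boundary.

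The main obstacle is the normalization problem. Corollary \ref{cr:compact} controls the Gauss maps $G_n\circ \phi_n$ only up to a sequence of M\"obius transformations $u_n$; as noted in the remark following that corollary, these can be absorbed by composing the $\phi_n$ on the left with the hyperbolic isometries whose boundary actions are the $u_n$. Verifying that after this normalization a fixed reference point $\phi_n(\xi_0)\in \partial\Omega_n$ (for some $\xi_0\in U$) remains in a compact subset of $\HH^3$, so that Labourie's theorem produces a non-degenerate limit rather than a collapse onto $\partial_\infty\HH^3$, is the most delicate point. It requires combining the uniform quasi-conformality of the Gauss maps, the principal-curvature lower bound from Lemma \ref{lm:p-curvature} (which prevents $\partial\Omega_n$ from flattening onto a totally geodesic plane near $\phi_n(\xi_0)$), and the convergence $G_n(\phi_n(\xi_0))\to \psi_\infty(\xi_0)$.
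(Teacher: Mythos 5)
Your overall strategy coincides with the paper's: Labourie compactness on the components of $U$ for part (1), Lemma \ref{lm:flat-regions} plus the identity $dG_n = \mathrm{Id}+B_n$ for conformality on the interior of $\CP^1\setminus U$ in part (2), and the Hausdorff limit of the $\Omega_n$ for part (3). Parts (2) and (3) of your argument are essentially the paper's.

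The genuine gap is in part (1), and it is exactly the point you flag as ``the most delicate'' and then leave unresolved. Theorem \ref{tm:lab-compact} only produces convergence of $u^0_n\circ\phi_n|_{U_0}$ for \emph{some} sequence of isometries $u^0_n$ depending on the component $U_0$; nothing in your write-up rules out that, relative to the global normalization of Corollary \ref{cr:compact}, these $u^0_n$ diverge in $\Isom(\HH^3)$, so that $\phi_n|_{U_0}$ itself escapes to infinity (and different components drift apart, destroying convexity of the limit). Listing ingredients --- quasi-conformality, the lower principal-curvature bound, convergence of $G_n(\phi_n(\xi_0))$ --- is not a proof. The paper closes this gap with a short but essential argument: the Gauss maps of $u^0_n\circ\phi_n$ converge to $G_{0,\infty}\circ\phi_{0,\infty}$ by smooth convergence of the immersions, and these Gauss maps equal $\partial u^0_n\circ(G_n\circ\phi_n)$; since $G_n\circ\phi_n\to\psi_\infty$ uniformly by Corollary \ref{cr:compact}, the compositions $\partial u^0_n\circ\psi_\infty$ converge, and because $\psi_\infty$ is a (quasi-conformal) homeomorphism this forces the M\"obius transformations $\partial u^0_n$, hence the isometries $u^0_n$, to converge to a fixed $u^0$. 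One may therefore take $u^0_n$ constant, i.e.\ the globally normalized sequence $\phi_n|_{U_0}$ already converges. You need this step (or an equivalent one) to make part (1) --- and hence the relative positioning of the limit surfaces over distinct components of $U$, on which part (3) depends --- actually work.
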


\begin{proof}
  Let $U_0$ be a connected component of $U$. It follows from the arguments already used in Section \ref{ssc:principal} that there exists a sequence $(u^0_n)_{n\in \N}$ of hyperbolic isometries such that the sequence of restrictions of the $u^0_n\circ \phi_n$ to $U_0$ converges $C^\infty$ (after extracting a subsequence) to a limit $\phi_{0,\infty}$. It follows that the Gauss maps of the $u^0_n\circ \phi_n$ converge to the Gauss map of $\phi_{0,\infty}$ on $U_0$, that is,
  $$ \partial u_n^0 \circ(G_n\circ \phi_n) \to G_{0,\infty}\circ \phi_{0,\infty}~, $$
  where $G_{0,\infty}$ denotes the Gauss map of $\phi_{0,\infty}$. 

  However we already know by Corollary \ref{cr:compact} that the $G_n\circ \phi_n$ converge to a limit $\psi_\infty$, so that
  $$ \lim_{n\to \infty} \partial u_n^0\circ \psi_\infty=G_{0,\infty}\circ \phi_{0,\infty}~. $$
  As a consequence, $(u_n^0)_{n\in \N}$ converges to a hyperbolic isometry $u^0$. So, to obtain the convergence of $u_n^0\circ \phi_n\circ \phi_{0,\infty}$ above, it is sufficient to take $u_n^0$ constant and equal to $u^0$. This means that after replacing the $\phi_n$ by $u^0\circ \phi_n$, the sequence $(\phi_n)_{n\in \N}$ converges $C^\infty$ (without normalization by the sequence $(u^0_n)_{n\in \N}$). This proves the first point. 

  The second point is less obvious than it might appear. We already know that on $U$, $\phi_n\to \phi_\infty$, where $\phi_\infty$ is an isometry from $(U,h)$ to $\HH^3$ and is therefore conformal. However the conformality of the limit map is less clear on $\CP^1\setminus U$. This is because the Poincar\'e model of $\HH^3$ does not preserve convexity, so that the $\rho(\Omega)$ are not necessarily convex, and a sequence of conformal embeddings might well converge to a limit which is not conformal. This could conceivably happen at parts of $\partial\Omega_n$ converging to infinity in $\HH^3$.

  To avoid this issue for regions converging to $\partial_\infty \HH^3$, we note that for all $x\in \CP^1\setminus U$, $(\phi_n(x))_{n\in \N}$ has the same limit in $\partial_\infty \HH^3$ as $(G_n\circ \phi_n(x))_{n\in \N}$. Let $x\in \CP^1$ be contained in the interior of the complement of $U$, then Lemma \ref{lm:flat-regions} shows that the principal curvatures of $\phi_n$ at $x$ converge to $1$ as $n\to \infty$. It follows that the pull-back by $G_n\circ\phi_n$ of the conformal metric at infinity is quasi-conformal to $h_n$ in the neighborhood of $x$, with quasiconformal constant going to $1$ as $n\to \infty$. So $\phi_\infty$ is conformal on $U$ and on the interior of $\CP^1\setminus U$. 

  The third point follows from the construction, since $\phi_\infty(U)=\partial\Omega_\infty$, where $\Omega_\infty$ is the Hausdorff limit (on compact subsets of $\HH^3$) of the $\Omega_n$, which are convex subsets of $\HH^3$. 
\end{proof}

\begin{proof}[Proof of Theorem \ref{tm:main}]
  Let $(U,h)$ be a boundary data, with $U\subset \CP^1$ a disjoint union of $k_0$-quasidisks, and let $h$ be a complete metric of curvature $K\in [-1+\epsilon,-\epsilon]$ on $U$. We define a sequence $(h_n)_{n\in \N}$ of approximating metrics following Definition \ref{df:hn}, and those metrics have curvature $K_n\in [-1+\epsilon,K_{max}]$ by Lemma \ref{lm:Kmax}. (Note that the constant $K_{max}$ does not depend on the connected component of $U$ since each is a $k_0$-quasidisk, and the curvature estimates on $h_n$ are valid for all.)

  The Alexandrov Theorem (Theorem \ref{tm:alexH}) therefore realizes each of the $h_n$ as the induced metric on the boundary of a convex subset $\Omega_n\subset\HH^3$, as see in Section \ref{ssc:subsets}.

  Lemma \ref{lm:final} then shows that, after normalizing by a sequence of isometries and extracting a subsequence, $(\Omega_n)_{n\in \N}$ converges to a convex subset $\Omega_\infty$ realizing the boundary data $(U,h)$. 
\end{proof} 

\bibliographystyle{halpha}
\bibliography{/home/jean-marc/Dropbox/papiers/outils/biblio}
\end{document}